\newcommand\blfootnote[1]{%
	\begingroup
	\renewcommand\thefootnote{}\footnote{#1}%
	\addtocounter{footnote}{-1}%
	\endgroup
}
\def\dj{d\kern-0.4em\char"16\kern-0.1em}
\def\Dj{\mbox{\raise0.3ex\hbox{-}\kern-0.4em D}}
\def\be{\begin{equation}}
\def\ee{\end{equation}}
 \newcommand{\beq}{\begin{equation}}
 \newcommand{\eeq}{\end{equation}}
 \newcommand{\bea}{\begin{eqnarray}}
 \newcommand{\eea}{\end{eqnarray}}
 \newcommand{\beas}{\begin{eqnarray*}}
 \newcommand{\eeas}{\end{eqnarray*}}
 \newcommand{\beqs}{\begin{equation*}}
 \newcommand{\eeqs}{\end{equation*}}
 \newcommand{\bi}{\begin{itemize}}
 \newcommand{\ei}{\end{itemize}}
 \newcommand{\ben}{\begin{enumerate}}
 \newcommand{\een}{\end{enumerate}}
 \newcommand{\ba}{\begin{array}}
 \newcommand{\ea}{\end{array}}
 \newcommand{\R}{\mathbb R}
 \newcommand{\N}{\mathbb N}
 \newcommand{\Z}{\mathbb Z}
\newcommand{\NN}{\mathbb N}
\newcommand{\CC}{\mathbb C}
\newcommand{\RR}{\mathbb R}
\newcommand{\ZZ}{\mathbb Z}
\newcommand{\ds}{\displaystyle}
\newcommand{\SSS}{\mathcal S}
\newcommand{\VV}{\mathcal V}
\def\R{\mathbb{R}}
\def\N{\mathbb{N}}
\def\Z{\mathbb{Z}}
\def\dss{\displaystyle}
 \def\Z{\mathbf{Z}}
\def\N{\mathbb{N}}
\newtheorem{te}{Theorem}[section]
\newtheorem{lema}{Lemma}[section]
\newtheorem{prop}{Proposition}[section]
\newtheorem{rem}{Remark}[section]
\newtheorem{ex}{Example}[section]
\definecolor{darkred}{rgb}{0.8,0,0}
\begin{document}


\title{Almost diagonalization of $\Psi$DO's over various generalized function spaces
}
\author[Stevan Pilipovi\'c]{Stevan Pilipovi\'c$^*$}\blfootnote{$^*$ corresponding author}

\address{\hspace{-\parindent} Stevan Pilipovi\'c, Serbian Academy of Sciences and Arts, Serbia}

\email{stevan.pilipovic@gmail.com}

\author[Nenad Teofanov]{Nenad Teofanov}

\address{\hspace{-\parindent} Nenad Teofanov, Department of Mathematics and Informatics, Faculty of Sciences
	University of Novi Sad, Serbia}

\email{nenad.teofanov@dmi.uns.ac.rs}

\author[Filip Tomi\'c]{Filip Tomi\'c}

\address{\hspace{-\parindent} Filip Tomi\'c, Faculty of Technical Sciences,
	University of Novi Sad, Serbia}

\email{filip.tomic@uns.ac.rs}

\subjclass[2010]{47G30, 46F05, 42C15,  	58J40}
\keywords{modulation spaces, Gelfand-Shilov spaces, pseudo-differentail operators, H\"ormander metric}


\maketitle

\begin{abstract}
Inductive and projective type sequence spaces of sub- and super-exponential growth, and the corresponding inductive and projective limits of modulation spaces are  considered as a framework for almost diagonalization of pseudo-differential operators. Moreover, recent results of the first author and B. Prangoski related to the almost diagonalization of pseudo-differential operators in the context of H\"ormander metrics are reviewed.
\end{abstract}

\section{Introduction}

The main goal of this paper is to  offer a brief review of some recent results on 
 the almost diagonalization of pseudo-differential operators with symbols in various projective and inductive limits of modulation spaces, and spaces of generalized functions. The results were presented at the conference in Sarajevo dedicated to the 75. anniversary of academician Mirjana Vukovi\'c.

\par

Properties of pseudo-differential operators depend on the assigned classes of symbols. Here we  consider the Weyl correspondence between operators and symbols, see \eqref{Weyl-PsiDO}. Apart from the classical H\"ormander classes (\cite{horm3}), certain modulation spaces are recognized to be useful symbol classes, see \cite{Gr2} where the tools of time-frequency analysis are used in approximate diagonalization of related operators. This approach is thereafter developed and successfully used in different contexts, see \cite{CR} and the references given there. Let us just mention sparse decompositions for Schr\"odinger-type propagators given in \cite{CNR}, and diagonalization in the framework of tempered ultra-distributions, \cite{pt}.

General results from \cite{gro-rz} are recently extended to H\"ormander metrics by the first author and B. Prangoski in \cite{PPHM}.
It turns out that the class of weights used in \cite{PPHM} could be extended to the class of moderate weights (see subsection \ref{weights}). The main aim of this paper is to provide necessary background material for investigations in that direction.
This includes the introduction of new symbol classes as well as exposition of results for approximate diagonalization in the context of Gelfand-Shilov spaces.

We end this section with an explanation of the idea behind the notion of approximate diagonalization of operators.

%
%
%

\par

\subsection{Motivation}
Let us start with a general and simple example of a matrix type operator on a Hilbert space.
 Let $\psi_n, n\in\N$ be a basis for a separable Hilbert space $\mathcal H$,
 $f=\sum_{n\in\N} a_n\psi_n\in\mathcal H$, and $A:\mathcal H\rightarrow \mathcal H$ be linear and continuous.
Then
$$Af=\sum_{n\in\N}a_nA\psi_n=\sum_{n\in\N}a_n\sum_{m\in\N}b_{n,m}\psi_m=\sum_{m\in\N}\sum_{n\in\N}b_{n,m}a_n\psi_m.
$$
So, $A$ can be viewed as the action of an infinite matrix on a space of sequences:
$$
(a_n)_{n\in \N}\to (Af)_{m\in \N} = \big(\sum_{n\in\N} b_{n,m}a_n\big)_{m\in \N},
$$
More generally, instead of $\N \times \N$ one can observe indices in $\Lambda$, a discrete  subgroup of $\R^{2d}$. Such group is often represented  as $A \,\mathbb{Z}^{2d}$ ($\mathbb Z^{2d}$ is the set of integer points in $\mathbb R^{2d}$), where $A$ is a $2d$-dimensional, regular matrix with determinant ${\rm det} A<1$. We will also use the term lattice
for such $\Lambda$.

For the sake of simplicity, consider the lattice points of the form
$$
\lambda=(\alpha k,\beta i) \in \Lambda, \qquad k, i\in\mathbb Z^d.
$$
i.e.  $\Lambda=\alpha\mathbb Z^d\times\beta\mathbb Z^d$.
Then the time-frequency shifts of $g\in L^2(\mathbb R^d)$ are given by
$$\pi(\lambda) g=
\pi_{\alpha k, \beta i} g= e^{2\pi\alpha k\cdot t}g(t-\beta i), \;\; \lambda=(\alpha k,\beta i), \;\;
k \cdot t=\langle k,t \rangle =  \sum_j^dk_jt_j.
$$

Recall that the set $\mathcal{G}(g,\Lambda)=\{\pi(\lambda)g;\lambda \in\Lambda\}$ is  a Gabor frame in
$L^2(\RR^d)$  if for  every $f\in L^2(\RR^d)$
there exist $c_1,c_2>0$ such that
\begin{equation}\label{fr}
c_1\sum_{\lambda\in\Lambda}|\langle f,\pi(\lambda)g\rangle|^2\leq||f||_{L^2}^2\leq\
c_2\sum_{\lambda\in\Lambda}|\langle f,\pi(\lambda)g\rangle|^2
\end{equation}
($\langle \cdot ,\cdot \rangle$ here denotes the scalar product in $L^2(\mathbb R^d)$). If $c_1=c_2$ then $\mathcal{G}(g,\Lambda)$ is called a tight frame and $\gamma=g$.
If  $\mathcal{G}(g,\Lambda)$ is a frame, then there exists a dual window  $\gamma\in L^2(\RR^d)$ such that
\begin{equation}\label{frexpansion}
f=\sum_{\lambda\in\Lambda}\langle f,\pi(\lambda)g\rangle\pi(\lambda)\gamma \in L^2(\RR^d), \qquad 
f \in L^2(\RR^d).
\end{equation}

 If 
$H_0(x)=e^{-a|x|^2}$, $ x\in\RR^d,$ $ a>0$, and $ \lambda\in \Lambda =\alpha\mathbb Z^d\times\beta\mathbb Z^d$, $\alpha \beta <1$, then $\mathcal G(H_0,\lambda)$
is a frame in ${L^2}(\RR^d)$, and its dual frame $\gamma$ satisfies
$$|\gamma(x)|+|\widehat{\gamma}(x)|\leq Ce^{-c|x|^2}, \quad x\in\RR^d.
$$
Such frames are also referred as superframes. For the details we refer to \cite{GLy, YuL, SW}.
Moreover, it is well known (\cite{CR}Theorem 3.2.21.) that for  $g(x)=e^{-\pi x^2}$, $x\in \R^d$,the set
$${\mathcal G}(g, \Lambda)={\mathcal G}(g,\alpha, \beta) = \{\pi(\lambda)g;\lambda \in\Lambda\}$$ 
is a Gabor frame for $L^2 (\R^d)$ if and only if 
$\alpha \beta <1$. 
We also mention that if $\alpha>0$ and $g\in W(\R^d)$ are such that
$$a\leq \sum_{k\in \Z^d}|g(x-\alpha k)|^2\leq b, \quad x\in \R^d,$$
then there exists $\beta_0=\beta_0 (\alpha)$ such that ${\mathcal G}(g,\alpha, \beta)$ is the frame for all $\beta \leq \beta_0$. Here, $W(\R^d)$ is the Wiener space which consists of function that are locally bounded, and globally in $L^1(\R^d)$. For details we refer to \cite{CR} (see Section 3).

\par

Next we introduce pseudo-differential operators. 
Let $\pi(\lambda)g$, $\lambda\in\Lambda\subset \mathbb R^{2d}$ be a tight frame, and for $f,\phi\in L^2(\mathbb R^d)$
we consider expansions
$$
f(t)=\sum_{\lambda\in\Lambda}a_{\lambda}\pi(\lambda)g(t), \qquad
\phi(t)=\sum_{\lambda\in\Lambda}b_{\lambda}\pi(\lambda)g(t), \quad t \in \mathbb R^d.
$$

The Weyl-H\"ormander pseudodifferential operator (or the Weyl transform) $a^w$ with the symbol $a \in \mathcal{S}^{\prime}\left(\mathbb{R}^{2 d}\right)$ (see subsection \ref{notation} for the notation) is defined by
\begin{equation} \label{Weyl-PsiDO}
a^w f(x)=\int_{\mathbb{R}^{2d}} a\left(\frac{x+y}{2}, \xi\right) e^{2 \pi i(x-y) \cdot \xi} f(y) d y d \xi, \quad {f\in \mathcal S(\R^d)}.
\end{equation}
Then we have
\begin{multline*}
\langle a^w(t,D)f(t),\phi(t)\rangle \\ 
= \langle \sum_{(k,i) \in \mathbb Z^{2d}}a_{k,j} a^w(t,D)\pi_{\alpha k,\beta i}g(t),\sum_{(p,q)\in\mathbb Z^{2d}}b_{p,q}\pi_{\alpha p,\beta q}g(t)\rangle \\
= \sum_{k,i}\sum_{(p,q)\in\mathbb Z}a_{k,i}b_{p,q}\langle a^w(t,D) \pi_{\alpha k,\beta i}g(t),\pi_{\alpha p,\beta q}g(t) \rangle.
\end{multline*}

If we denote by $A=A_{\ZZ^{2d}\times\ZZ^{2d}}$ the infinite dimensional matrix of the dimension $\mathbb Z^{2d}\times\mathbb Z^{2d}$  with elements 
$$
\langle a^w(t,D) \pi_{\alpha k,\beta i}g(t),\pi_{\alpha p,\beta q}g(t)\rangle, \qquad (k,i), (p,q) \in\mathbb Z^{2d},
$$
then
$$\langle a^w(t,D)f(t),\phi(t)\rangle=(a_{k,i})_{1\times\ZZ^{2d}}\,A\,(b_{p,q})_{\ZZ^{2d}\times 1},\qquad  (k,i),(p,q)\in\ZZ^{2d},
$$

where the expansion of $f$ and $\phi$ is clear.

In such a way we may represent $a^w$ as a matrix type operator whose properties are determined by
the matrix elements. Asymptotic decay estimates of these elements away from diagonal are related to mapping properties of the corresponding operator. For that reason the term approximate diagonalization  is used to describe techniques
based on these observations.

Thus, the goal is to characterize the decrease of the matrix $A$ far from its diagonal. In the continuous case, instead of a matrix, we use an integral representation, namely 
the short-time Fourier transform (STFT)  as it will be explained in Section \ref{preliminaries}.


\section{Preliminaries} \label{preliminaries}

In this section we fix general notation, and then proceed with basic facts on weight functions, short-time Fourier transform, modulation spaces, and pseudo-differential operators.

\subsection{Notation} \label{notation}
The sets of all all integers, positive integers, nonnegative integers, real and complex numbers are denoted by $\ZZ, \mathbb{N}, \mathbb{N}_0, \mathbb{R}$ and $\mathbb{C}$, respectively. For $x=\left(x_1, \ldots, x_d\right) \in \mathbb{R}^d$, and a multi-index $\alpha= \left(\alpha_1, \ldots, \alpha_d\right) \in \mathbb{N}_0^d$, we use the notation: $ |x|:=\left(x_1^2+\ldots+x_d^2\right)^{1 / 2} $,  $x^\alpha:=\prod_{j=1}^d x_j^{\alpha_j}$,  
$ |\alpha|:=\alpha_1+\ldots+\alpha_d $, $ \alpha !:= \alpha_{1} ! \cdots \alpha_{d} ! $, $D^\alpha=D_x^\alpha:=D_1^{\alpha_1} \cdots D_d^{\alpha_d}$, where $D_j^{\alpha_j}:=\left(-i \partial / \partial x_j\right)^{\alpha_j}(j=1, \ldots, d)$. We write capital letters $X,Y,Z...$ for elements in $ \mathbb{R}^{2 d}$, and $ A \lesssim B $ means
$ A \leq cB$ for a suitable constant $c>0$.

The symbol $K \subset \subset V$ for an open $V \subset \mathbb{R}^d$ means that $K$ is a compact subset of $V$. By $\hookrightarrow$ we denote continuous embeddings between two Banach spaces. The norm in $L^p\left(\mathbb{R}^d\right)$ is denoted by $\|\cdot\|_p, \, 1\leq p \leq \infty$, and the corresponding sequence spaces will be denoted by $l^p$. Fourier transform $\mathcal F$ of a function $f\in L^1$ will be denoted by 
$$\dss {\hat f}(\xi)= \int_{\R^d}f(x) e^{-2\pi i x\xi}dx \qquad (\mathcal F^{-1}f(\xi)=\mathcal Ff(-\xi)).$$ 

$ \mathcal{S} (\mathbb{R}^d)$ denotes the Schwartz space of infinitely smooth ($ C^\infty (\mathbb{R}^d)$) functions which, together with their derivatives, decay at infinity faster than any inverse polynomial. Its dual space of tempered distributions is denoted by $ \mathcal{S} ' (\mathbb{R}^d)$. 

Function $f$ belongs to  weighted space $L^p_v (\R^d)$ if $fv\in L^p(\R^d)$, $1\leq p \leq \infty$, where
$v$ is a weight function, see below.

\subsection{Weights} \label{weights}
Function $v$ is called a weight  function, or simly a weight, if it is locally bounded, non-negative, even and continuous on $\R^d$. Recall,
\begin{itemize}
\item[a)]  $v$ is submultiplicative if,
$v(x+y)\lesssim v(x)v(y)$, $x,y\in \R^d$,
\item[b)] $v $ is subconvolutive if $v^{-1}\in L^1$ and
 $(v^{-1}*v^{-1}) (x) \lesssim v^{-1}(x) $, $x\in \R^d$.
\item[c)] $m$ is moderate if there exists submultiplicative weight $v$ such that,
$$
m(x+y)\lesssim v(x)m(y), \qquad x,y\in \R^d.
$$
\end{itemize}

Clearly, if $m$ is submultiplicative than it is also moderate.
By ${\mathcal M}_{v}$ we denote the set of all $v-$ moderate weights.

\begin{ex} Standard examples of weight functions are given by
\be
\label{PrimerTezina}
m_{a, b, c, t}(x)=e^{a|x|^b}(1+|x|)^c(\log (e+|x|))^t , \quad a,c,t\in \mathbb R,\,\, b\geq 0,\,\, x\in \R^d.
\ee
\end{ex}

Properties of $m_{a, b, c, t}$ are given in the following Lemma from \cite{GRweights} (see also \cite{Fei}).

\begin{lema} If $m=m_{a, b, c, t}$ is given by \eqref{PrimerTezina} then
\label{GrochenigLema}
\begin{itemize}
\item[a)]  $m$ is submultiplicative if $a, c, t \geq 0$ and $0 \leq b \leq 1$,
\item[b)]  $m$ is subconvolutive if  $ a>0, c, t \in \mathbb{R}$ and $0<b<1$,
\item[c)]  $m$ is moderate if $a, c, t \in \mathbb{R}$ and $0 \leq b \leq 1$.

In addition, if $v$ is arbitrary submultiplicative weight then there exists $r>0$ such that 
$\dss v(x)\lesssim e^{ r |x|} $, $x\in \R^d$.
\end{itemize}
\end{lema}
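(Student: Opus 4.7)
The claims (a) and (c) reduce to factor-by-factor inequalities, (b) is the technical heart, and the addendum is a standard iteration.

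For (a) and (c) I would treat the three factors of $m_{a,b,c,t}$ separately. The exponential factor uses the subadditivity $|x+y|^{b} \leq |x|^{b}+|y|^{b}$ for $0\leq b \leq 1$ (concavity of $t\mapsto t^{b}$), together with its reverse $|y|^{b} \leq |x|^{b}+|x+y|^{b}$ to handle $a<0$ in (c); these give $||x+y|^{b}-|y|^{b}| \leq |x|^{b}$, hence $e^{a|x+y|^{b}} \leq e^{|a||x|^{b}}e^{a|y|^{b}}$. The polynomial factor relies on $1+|x+y| \leq (1+|x|)(1+|y|)$ and its reverse $1+|y| \leq (1+|x|)(1+|x+y|)$. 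For the logarithm one uses $(e+s)(e+t) \geq e+s+t$ to get $\log(e+|x+y|) \leq \log(e+|x|) + \log(e+|y|) \leq 2\log(e+|x|)\log(e+|y|)$ (since $\log(e+\cdot) \geq 1$), together with the analogous two-sided bound needed when $t<0$. Combining the three factor estimates yields (a); taking $v := m_{|a|,b,|c|,|t|}$ as the submultiplicative modulant (which is submultiplicative by (a)) yields (c).

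For (b), integrability of $m^{-1}$ on $\mathbb{R}^{d}$ is immediate since $e^{-a|x|^{b}}$ dominates any polynomial and logarithmic growth for $a,b>0$. For the convolution bound, the key input, valid precisely for $0<b<1$, is the strict gain
\[
|y|^{b} + |x-y|^{b} - |x|^{b} \geq (1-b)\,|y|^{b}\qquad \text{whenever } |y| \leq |x-y|,
\]
which covers half of $\mathbb{R}^{d}$ by the $y \leftrightarrow x-y$ symmetry of $m^{-1}(y)m^{-1}(x-y)$. Indeed, if $|x| \leq |x-y|$ this is trivial; otherwise the mean value theorem applied to $t \mapsto t^{b}$ on $[|x-y|, |x|]$ gives $|x|^{b} - |x-y|^{b} \leq b\,|x-y|^{b-1}|y| \leq b\,|y|^{b}$, using $b-1<0$ and $|y| \leq |x-y|$. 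The convolution estimate then reduces to
\[
\int_{\mathbb{R}^{d}} e^{-a(1-b)|y|^{b}}\, W(x,y)\, dy \lesssim 1,
\]
where $W(x,y)$ collects the polynomial and logarithmic ratios. Using $1+|x| \leq (1+|y|)(1+|x-y|)$ for $c,t \geq 0$ and splitting $|y| \leq |x|/2$ vs.\ $|y| > |x|/2$ for $c,t<0$ (in either region one of $1+|x-y|$, $1+|y|$ is comparable to $1+|x|$), one bounds $W(x,y) \lesssim (1+|y|)^{N}(\log(e+|y|))^{N'}$ with constants depending on $c,t$, so the remaining exponential decay wins. The main obstacle here is the case-by-case accounting of the polynomial and logarithmic corrections of both signs; the exponential gain is clean.

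For the addendum, let $M := \sup_{|z| \leq 1} v(z) < \infty$ (finite by continuity and local boundedness) and let $C$ be the implicit constant in $v(x+y) \leq C v(x) v(y)$; submultiplicativity at $0$ forces $v(0) \geq 1/C$, so $CM \geq 1$. For $x \in \mathbb{R}^{d}$ with $|x| \geq 1$, set $n := \lceil |x| \rceil$, so that $|x/n| \leq 1$; iterating submultiplicativity gives
\[
v(x) = v\!\left(n \cdot \tfrac{x}{n}\right) \leq C^{\,n-1} v(x/n)^{n} \leq C^{\,n-1} M^{n} \leq (CM)^{|x|+1},
\]
whence $v(x) \lesssim e^{r|x|}$ with $r = \log(CM)$, as claimed.
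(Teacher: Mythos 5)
Your proof is correct. Note, however, that the paper itself does not prove this lemma: it is quoted verbatim from Gr\"ochenig's survey on weight functions (the reference \cite{GRweights}), so there is no in-paper argument to compare against. The only related computation in the text is the discussion immediately after the lemma, which treats the weights $m_r^s=m_{r,1/s,0,0}$ in the regime $0<s\le 1$, i.e.\ $b=1/s\ge 1$ --- precisely the regime excluded from part b), where genuine subconvolutivity fails and only the weaker estimate \eqref{RodinoNejednakost} survives. Your argument is a complete, self-contained substitute for the citation: parts a) and c) by the factor-by-factor two-sided estimates ($||x+y|^b-|y|^b|\le|x|^b$ for $0\le b\le 1$, and the analogous bounds for the polynomial and logarithmic factors), with $v=m_{|a|,b,|c|,|t|}$ as the modulating weight; the iteration $v(nz)\le C^{n-1}v(z)^n$ for the addendum. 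The heart of b), the inequality $|y|^b+|x-y|^b-|x|^b\ge(1-b)|y|^b$ on the half-region $\{|y|\le|x-y|\}$ obtained via the mean value theorem for $t\mapsto t^b$, is exactly the mechanism behind \cite[Lemma 1.3.5]{CR} and the cited source, and your handling of the polynomial/logarithmic corrections of either sign (bounding the ratio $W(x,y)$ by a polynomial in $|y|$, which the residual exponential gain $e^{-a(1-b)|y|^b}$ absorbs) is sound; in fact the single estimate $1+|x-y|\le(1+|y|)(1+|x|)$ already gives $W(x,y)\lesssim(1+|y|)^{2|c|}(\log(e+|y|))^{2|t|}$ without the split at $|y|=|x|/2$. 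The degenerate case $v\equiv 0$ (where $v(0)\ge 1/C$ fails) is trivially covered, so the addendum stands as written.
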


In this paper we  consider weights of the form
\be
m_r^s(x)=e^{r| x|^{1 / s}},\quad r>0, \quad s>0\quad x\in \R^d.
\ee 
By Lemma \ref{GrochenigLema} it follows  that $m^s_{r}=m_{r,1/s,0,0}$ is submultiplicative and subconvolutive if $r>0$ and $s>1$.

Let us briefly discuss subconvolutivity of $m^s_{r}$, since it will be used later on. Clearly, $(m_r^s)^{-1}=m_{-r}^s \in L^1(\R^d)$. For the case $r>0$ and $0<s\leq 1$  \cite[Lemma 1.3.5]{CR}  gives
\be
\label{RodinoNejednakost}
m_{-r}^s*m_{-r}^s(x)\leq C m_{-2^{-1/s }r}^s (x),\quad x\in \R^d.
\ee 
Indeed, note that simple inequality $\dss |x+y|^{1/s}\leq 2^{\frac{1-s}{s}}  (|x|^{1/s}+|y|^{1/s}),\, x,y\in \R^d$ implies that
\be
m_{r}^s (x+y)\leq m_{2^{\frac{1-s}{s}}  r}^s(x) m_{ 2^{\frac{1-s}{s}}  r}^s(y),\quad x,y\in \R^d.\nonumber
\ee 
or equivalently
\be
m_{-r }^s (x-y) m_{-r }^s (y)  \leq m_{- 2^{\frac{s-1}{s}}  r}^s(x),\quad x,y\in \R^d.\nonumber
\ee 
Then we obtain
\begin{multline}
m_{-r}^s*m_{-r}^s(x)=\int_{\R^d}m_{-r}^s(x-y)m_{-r}^s(y) dy\\ \leq   \int_{\R^d} m_{-r/2}^s(x-y)\, m_{-r/2}^s(y)\,m_{-r/2}^s(y) dy \lesssim m_{- 2^{-1/s}  r}^s(x),\quad x \in \R^d.
\end{multline}

Function $f$ belong to a weighted space $L^p_v (\R^d)$ if $fv\in L^p(\R^d)$.









\subsection{STFT and modulation spaces} \label{stft-mod-spaces}

By $( f,\varphi )$ we denote  the dual pairing between  $ f\in \mathcal S' (\R^d)$ and $ \varphi \in \mathcal S (\R^d)$ and the dual pairing in the context of Gelfand-Shilov type spaces in Section \ref{gelfand-shilov}.

Let
$$
\pi(Z) g(t)=M_{\xi} T_x g=e^{2 \pi i t \cdot \xi} g(t-x),\;\;
g\in \mathcal{S}\left(\mathbb{R}^d\right)\backslash\{0\}, \;\;
Z=(x, \xi) \in \mathbb{R}^{2 d}.
$$

The short-time Fourier transform (STFT) of $f\in \mathcal{S}'\left(\mathbb{R}^d\right) $ with respect to a given 
window $g\in \mathcal{S}\left(\mathbb{R}^d\right)\backslash\{0\}$ is defined as
$$
V_g f(x, \xi)=\langle f, \pi(Z) g\rangle=\int_{\mathbb{R}^d} f(t) \overline{g(t-x)} e^{-2 \pi i t \xi} d t.
$$
The same formula over $ \mathbb{R}^{2 d}$ is given by
\begin{equation} \label{bigSTFT}
\mathcal{V}_{g} f(X, \Xi)=\int_{\mathbb{R}^{2d}} f(t_1,t_2) \overline{g((t_1,t_2)-(x_1,x_2))} e^{-2 \pi i (t_1,t_2)\cdot (\xi_1, \xi_2)} dt_1 dt_2,
\end{equation}
where $X=(x_1,x_2)$ and $\Xi=(\xi_1,\xi_2)$.

The (cross-)Wigner distribution is given by
\begin{equation} \label{wigner}
W(f, g)(x, \xi)=\int_{\mathbb{R}^{ d}} f(x-\frac{t}{2}) \overline{g(x+\frac{t}{2})} e^{-2 \pi i\xi t } d t, \qquad
f,g \in L^2 (\R^{d}),
\end{equation}
and when $g \in  \mathcal{S}\left(\mathbb{R}^d\right)$ it extends to 
$f \in  \mathcal{S}' \left(\mathbb{R}^d\right)$ by duality.

If $X=(x_1,x_2)$ and $\Xi=(\xi_1,\xi_2)$ belong to $\R^{2d}$, we write
\begin{multline*}
{\mathcal W} (f,g) (X, \Xi) \\ =\int_{\mathbb{R}^{ 2d}} f((x_1,x_2)-\frac{(t_1,t_2)}{2}) \overline{g((x_1,x_2)+\frac{(t_1,t_2)}{2})} e^{-2 \pi i (t_1,t_2) \cdot(\xi_1,\xi_2) } dt_1dt_2.
\end{multline*}

We recall that for $f\in \mathcal{S}'\left(\mathbb{R}^d\right) $ and $g\in \mathcal{S}\left(\mathbb{R}^d\right) $ it holds
$$W(f,g)(x,\xi)=2^d e^{4\pi i x \xi}\, V_{g^*}f(2x, 2\xi),\quad x, \xi\in \R^d, $$ where $g^*(x)=g(-x)$ 
(see \cite{Gr1}).



Modulation spaces are introduced by imposing mixed Lebesgue spaces norm to the STFT as follows.

Let $1\leq p,q\leq \infty$ and $m \in {\mathcal M}_{v}$, and let 
$g\in \mathcal{S}\left(\mathbb{R}^d\right)\backslash\{0\}$.
Then the weighted modulation space $M_m^{p, q}\left(\mathbb{R}^d\right)$ consists of $f\in \mathcal{S}'\left(\mathbb{R}^d\right)$ with the property
\begin{equation} \label{mod-space-norm}
\|f\|_{M_m^{p, q}}=\left(\int_{\mathbb{R}^d}\Big( \int_{\mathbb{R}^d}\left|V_g f(x, \xi)\right|^p \left| m(x, \xi)\right|^p d x\Big)^{q / p} d \xi\right)^{1 / q}<\infty,
\end{equation} 
with obvious changes if $p,q=\infty$. It is a Banach space with the norm $\|\cdot\|_{M_m^{p, q}}$, and it is well known that the definition does not depend on the choice of the window $g\in \mathcal{S}\left(\mathbb{R}^d\right)\backslash\{0\}$ in the sense that different Schwartz functions yield equivalent norms.

The original source for modulation spaces is \cite{F2}, see also \cite{Gr1}, and the recent monograph \cite{CR}.

\subsection{Pseudo-differential operators} \label{stft-mod-spaces}

We end this section with some remarks on the Weyl-H\"ormander pseudo-differential operators given by \eqref{Weyl-PsiDO}.
By a straightforward calculation one can show that for $ f \in \mathcal{S}' \left(\mathbb{R}^d\right)$ formula \eqref{Weyl-PsiDO} is (in the weak sense) equivalent to
$$
\left\langle a^w f, g\right\rangle=\langle a, W(g, f)\rangle \qquad  g \in \mathcal{S}\left(\mathbb{R}^d\right),
$$ 
where $W$ is the Wigner distribution given by \eqref{wigner}.
It is well known that the mapping 
$$a^w:\mathcal S(\mathbb R^d)\rightarrow \mathcal S'(\mathbb R^d)$$ 
is continuous.

The relation between $a^w $  and the STFT is given by the use of  the  symplectic structure on $\R^d$. 
This is an important observation when considering  metrics different than the Euclidean ones, see Section \ref{extension}.

\par

For $(\xi,\eta)\in \R^{2d}$ we denote $j(\xi,\eta)=(\eta, -\xi)$, Let $g\in \mathcal{S}\left(\mathbb{R}^d\right) \setminus \{ 0\}$. Recall that Lemma \cite[Lemma 14.5.1]{Gr1} implies the following  formula:
\be 
\left|\left( a^w \pi(X) g, \pi(Y) g\right)\right|=\left|\mathcal{V}_{\Phi} a\left(\frac{X+Y}{2}, j(Y-X)\right)\right|, \quad X,Y\in \R^{2d},
\ee where $\Phi = W(g,g)\in  \mathcal{S}\left(\mathbb{R}^{2d}\right)$. By the change of variables we obtain
\be
\left|\mathcal{V}_\Phi \sigma(U, V)\right|=\left|\left\langle\sigma^w \pi\left(U-\frac{j^{-1}(V)}{2}\right) g, \pi\left(U+\frac{j^{-1}(V)}{2}\right) g\right\rangle\right|,
\ee
$ U, V \in \mathbb{R}^{2 d} $.
Note that the standard symplectic form on $\R^{2d}$ is related to $j$ by
$$[(x,\xi), (y,\eta)]=\langle j(x,\xi), (y,\eta) \rangle = \langle \xi,y \rangle -\langle x,\eta \rangle,\quad x,y,\xi,\eta\in \R^d. 
$$ 
Also,
$$[(x,\xi), (y,\eta)]= \begin{bmatrix}
   y    & \eta \\
\end{bmatrix}\cdot \begin{bmatrix}
   0     & I\\
  -I      & 0
\end{bmatrix} \cdot \begin{bmatrix}
   x  \\
  \xi
\end{bmatrix}, \quad x,y,\xi,\eta\in \R^d,  
$$ 
where the standart symplectic matrix $ \dss \begin{bmatrix}
   0     & I\\
  -I      & 0
\end{bmatrix}$ is $2d\times 2d$ block matrix and $I$ is $d\times d$ identity matrix.

\section{Novel spaces for symbols of pseudo-differential operators} \label{spaces}

To extend results from \cite{PPHM}  into the framework of Gelfand--Shilov spaces given in \cite{CR} (see Section \ref{gelfand-shilov}) we need a careful preparation. This section contains original material, namely
the construction of particular Wiener-amalgam spaces $ W\left(L^{\infty},\mathcal{A}_r^s\right)$ which are used in the definition of new modulation spaces
$\widetilde{M}_{proj,s}^{\infty}$ and $ \widetilde{M}_{ind,s}^{\infty}$.
Then the symbols of pseudo-differential operators are distributions from 
$\widetilde{M}_{proj,s}^{\infty}$ or $ \widetilde{M}_{ind,s}^{\infty}$.

\subsection{Spaces of sequences} We introduce spaces of sequences which are convenient for our investigations. We note that 
these sequences are considered in \cite{Toft18} in the context of mapping properties of the Bargmann transforms
(cf. \cite[Definition 3.1]{Toft18}).

\par

Let $\Lambda$ be a discrete subgroup of $\R^d$, and 
\begin{equation} \label{our-weight}
m^s_r(\cdot)=e^{r|\cdot|^{1/s}}, \qquad r,s>0.
\end{equation}
Then the sequence $\mathbf{a}=(a_{\lambda})_{\lambda\in \Lambda}$ belongs to the Banach space  $l_{m^s_r}^{\infty}(\Lambda)$ if
$$||\mathbf{a}||_{r,s}=\|\mathbf{a}\, m^s_r \|_{\infty}=\sup_{\lambda\in \Lambda}|a_{\lambda}|e^{r |\lambda|^{1/s}}<\infty.$$
We will use the notation $\mathcal{A}_r^s=l_{m^s_r}^{\infty}(\Lambda)$.

It is well known that $l_v^{\infty} (\Lambda)$ is Banach algebra with respect to convolution if and only if $v$ is subconvolutive (see \cite{Fei}). Since  $m^s_r$ is not subconvolutive when $0<s\leq 1$  (see \eqref{RodinoNejednakost}), it follows that $\mathcal{A}_r^s$ is not a convolution algebra.

We collect some of the basic properties of  $\mathcal{A}_r^s$ in the following Lemma.

\begin{lema} Let $s,r>0$.
\label{PrvaLema}
\begin{itemize}

\item[i)] If $0<r_1<r_2$ then $\mathcal{A}_{r_2}^s \hookrightarrow \mathcal{A}_{r_1}^s \hookrightarrow l^1$ where $\hookrightarrow$ denotes continuous embedding. Moreover, this embedding is compact.
\item[ii)] $\mathcal{A}_r^s$ is involutive, i.e., if $\mathbf{a}\in \mathcal{A}_r^s$ then $\mathbf{a}^*=(a_{-\lambda})_{\lambda\in \Lambda}\in \mathcal{A}_r^s$ and $||\mathbf{a}^*||_{r,s} = ||\mathbf{a}||_{r,s}$.
\item[iii)] $\mathcal{A}_r^s$, is solid, i.e.,  if $ \mathbf{b}\in \mathcal{A}_r^s$ and $a_{\lambda}\leq b_{\lambda}$ for all $\lambda \in \Lambda$ then $ \mathbf{a}\in \mathcal{A}_r^s$ and  $||\mathbf{a}||_{r,s}\leq ||\mathbf{b}||_{r,s}$.

\item[iv)] Let  $\mathbf{a}, \mathbf{b}\in \mathcal{A}_r^s$ and $\mathbf{c}=\mathbf{a}*\mathbf{b}$. Then there exists $c_s=c\in(0,1]$ such that

 \be
\label{NormaNejednakost}
\|\mathbf {c}\|_{cr,s}\lesssim \|\mathbf {a}\|_{r,s}\|\mathbf {b}\|_{r,s} .
\ee

\end{itemize}
\end{lema}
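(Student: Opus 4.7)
My plan is to establish (i)--(iv) in order, with (iv) being the main technical item that relies on the convolution estimate (2.3) from the excerpt.

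For (i), the embedding $\mathcal{A}_{r_2}^s \hookrightarrow \mathcal{A}_{r_1}^s$ is immediate since $e^{r_1|\lambda|^{1/s}} \leq e^{r_2|\lambda|^{1/s}}$, hence $\|\mathbf{a}\|_{r_1,s} \leq \|\mathbf{a}\|_{r_2,s}$. The embedding $\mathcal{A}_{r_1}^s \hookrightarrow l^1$ follows from $|a_\lambda| \leq \|\mathbf{a}\|_{r_1,s}\, m_{-r_1}^s(\lambda)$ together with $\sum_{\lambda \in \Lambda} m_{-r_1}^s(\lambda) < \infty$, a consequence of $\Lambda$ being a discrete subgroup of $\R^d$ (identifiable with a lattice $A\mathbb{Z}^k$). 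For compactness, I would take a bounded sequence $(\mathbf{a}^{(n)})_n$ in $\mathcal{A}_{r_2}^s$ with $\|\mathbf{a}^{(n)}\|_{r_2,s} \leq M$; each coordinate is bounded in $\mathbb{C}$, so a standard diagonal procedure produces a subsequence converging pointwise to some $\mathbf{a}$ satisfying $|a_\lambda| \leq M m_{-r_2}^s(\lambda)$. To upgrade to $\mathcal{A}_{r_1}^s$-norm convergence, I would split the supremum at $|\lambda| = R$: the tail is controlled by $2M e^{-(r_2-r_1)R^{1/s}}$, which is arbitrarily small for large $R$, while the head (finitely many indices) vanishes by pointwise convergence.

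Parts (ii) and (iii) will be immediate from the definition: involutivity follows from $|\lambda|^{1/s}=|-\lambda|^{1/s}$ and the change of variable $\lambda \mapsto -\lambda$ in the supremum; solidity is the monotonicity of the supremum.

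The substance is (iv). The strategy is first to estimate
\[
|c_\mu| \leq \|\mathbf{a}\|_{r,s}\|\mathbf{b}\|_{r,s} \sum_{\lambda \in \Lambda} m_{-r}^s(\lambda)\, m_{-r}^s(\mu-\lambda),
\]
and then to bound the sum by $C\, m_{-c_s r}^s(\mu)$ uniformly in $\mu$, which directly yields $\|\mathbf{c}\|_{c_s r,\, s} \lesssim \|\mathbf{a}\|_{r,s}\|\mathbf{b}\|_{r,s}$. For $s > 1$ the weight $m_r^s$ is subconvolutive by Lemma \ref{GrochenigLema}, so one can take $c_s = 1$ immediately. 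For $0 < s \leq 1$ I would mimic the splitting used in the excerpt to derive (2.3): factor $m_{-r}^s = (m_{-r/2}^s)^2$, apply the key inequality $|x+y|^{1/s} \leq 2^{(1-s)/s}(|x|^{1/s} + |y|^{1/s})$ with $x = \lambda$, $y = \mu - \lambda$ to get $m_{-r/2}^s(\lambda) m_{-r/2}^s(\mu-\lambda) \leq m_{-2^{-1/s} r}^s(\mu)$, and bound the remaining factor $m_{-r/2}^s(\lambda) m_{-r/2}^s(\mu-\lambda)$ by noting that $\sum_\lambda m_{-r/2}^s(\lambda) < \infty$ and $m_{-r/2}^s \leq 1$. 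This produces $c_s = 2^{-1/s} \in (0, 1/2]$. The main obstacle is distributing the weight so that one factor produces the desired decay in $\mu$ while the other remains absolutely summable in $\lambda$; the symmetric split of $m_{-r}^s$ into two copies of $m_{-r/2}^s$ accomplishes exactly this.
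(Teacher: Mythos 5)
Your argument is correct, and for the main item $iv)$ it coincides with the paper's proof: both reduce the discrete convolution bound to the subconvolutivity-type estimate $m_{-r}^s\ast m_{-r}^s\lesssim m_{-cr}^s$, taking $c=1$ for $s>1$ (genuine subconvolutivity, Lemma \ref{GrochenigLema}) and $c=2^{-1/s}$ for $0<s\leq 1$ via the symmetric splitting $m_{-r}^s=(m_{-r/2}^s)^2$ and the inequality $|x+y|^{1/s}\leq 2^{(1-s)/s}(|x|^{1/s}+|y|^{1/s})$, exactly as in the derivation of \eqref{RodinoNejednakost}; your version makes explicit that the integral there must be replaced by a sum over $\Lambda$, which the paper glosses over. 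The only genuine divergence is in part $i)$: the paper disposes of compactness in one line by appealing to the K\"othe theory of sequence spaces (the quotient of the weights, $e^{(r_1-r_2)|\lambda|^{1/s}}$, tends to $0$ at infinity), whereas you give a self-contained proof by a diagonal extraction plus splitting the supremum at $|\lambda|=R$, controlling the tail by $2Me^{-(r_2-r_1)R^{1/s}}$. Your route is more elementary and verifiable without outside references; the paper's citation is shorter and places the lemma in the standard framework where such compactness (and the nuclearity used later for $\mathcal{A}^s_{proj}$, $\mathcal{A}^s_{ind}$) comes for free. Parts $ii)$ and $iii)$ are treated as immediate in both.
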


\begin{proof}

We will only prove $i)$ and $iv)$ since $ii)$ and $iii)$ are straightforward.

$i)$ The compactness of the embedding is the consequence of the K\"othe theory of sequence spaces since the weights satisfy $e^{(r_1-r_2)|\lambda|^{1/s}}\to 0$ as $|\lambda| \to \infty$.

$iv)$ Note that Lemma \ref{GrochenigLema} and \eqref{RodinoNejednakost} implies that there exists $c\in(0,1]$ ($c=1$ for $s>1$ or $c=2^{-1/s}$ for $0<s\leq 1$) such that

\be
\label{Subconvolution}
\dss m_{-r}^s*m_{-r}^s (x)\lesssim  m_{-cr}^s (x) \quad x\in \R^d.
\ee
Therefore, if $\mathbf{c}=(c_{\lambda})_{\lambda\in \Lambda}$ we obtain
\begin{multline}
|\dss c_{\lambda}|\leq\sum_{\mu\in\Lambda}|a_{\lambda-\mu}||b_{\mu}|\leq \|\mathbf {a}\|_{r,s}\|\mathbf {b}\|_{r,s} (m_{-r}^s*m_{-r}^s(\lambda))\\
\lesssim \|\mathbf {a}\|_{r,s}\|\mathbf {b}\|_{r,s}  m_{-cr}^s (\lambda) ,\quad \lambda \in \Lambda,
\end{multline}
which implies $\dss  |\dss c_{\lambda}|\,m_{cr}^s (\lambda)\lesssim \|\mathbf {a}\|_{r,s}\|\mathbf {b}\|_{r,s}$, $\lambda\in \Lambda$. The claim follows by taking the supremum over $\lambda$.


\end{proof}

We introduce the Frech\'et space ($FS-$ space) and the dual Frechet space ($DFS-$ space) of sequences, respectively, by taking the projective and inductive limit topologies:
$$
\mathcal{A}_{proj}^s=\varprojlim _{r \to \infty} \mathcal{A}_r^s, \quad \mathcal{A}_{ind}^s=\varinjlim _{r \to 0} \mathcal{A}_r^s,\quad s>0.
$$ 
These spaces are  nuclear, and by Lemma \ref{PrvaLema} it follows that the spaces  $\mathcal{A}_{proj}^s$ and $\mathcal{A}_{ind}^s$ are closed under convolution for any $s>0$. Moreover, for a given  $r_0>0$ we have
$$
\mathcal{A}_{proj}^s\hookrightarrow \mathcal{A}_{r_0}^s\hookrightarrow  \mathcal{A}_{ind}^s \hookrightarrow l^1,\quad s>0.
$$ 


\subsection{Wiener amalgam spaces related to $\mathcal{A}_r^s$}
Next we introduce the Wiener-Amalgam space related to $\mathcal{A}_r^s$ when $r,s>0$.

Let $C$ be an open relatively compact set in $\R^{2d}$ that contains the origin and $\Lambda$ be a lattice such that $\R^{2d}\subset \dss \bigcup_{\lambda\in \Lambda}(\lambda+C)$, and $\Lambda$ be a lattice in $\R^{2d}$. For a locally bounded function $F$ on $\R^{2d}$ we set
\begin{equation}\label{win}
F_{\lambda}=\sup _{Y \in \lambda+C}|F(Y)|,\quad \lambda \in\Lambda.
\end{equation}
Then $F$ belongs to the Banach space $ W\left(L^{\infty},\mathcal{A}_r^s\right):=W^s_r$ if 
${\mathbf F}=\left(F_\lambda\right)_\lambda \in \mathcal{A}_r^s (\Lambda)$, and The norm 
in $W^s_r $ is given by $\|\displaystyle F\|_{W^s_r}=\|F_{\lambda}\|_{r,s}$. 
In particular, $W^s_r$ contains locally bounded functions whose decay rate at infinity is  bounded by
$m^s_{-r}(\cdot)=e^{-r |\cdot|^{1/s}}$.

\par

\begin{rem}
\label{RemarkWiener}
Note that $\dss W^s_r=W\left(L^{\infty},\mathcal{A}_r^s\right)=W(L^{\infty},l^{\infty}_{m^s_r}(\Lambda))=L_{m^s_r}^{\infty}(\R^{2d}). $
Moreover, by  Lemma \ref{PrvaLema} it follows that   $\dss  \mathcal{A}_r^s\hookrightarrow l^1$, and therefore $$W^s_r\hookrightarrow W(L^{\infty},{l^1}):= W(l^1),$$ 
where $W(l^1)$ is the Wiener space defined by $(F_\lambda)_{\lambda\in\Lambda}\in l^1$  (see (\ref{win})).

If $g\in W(l^1)$ and the lattice $\Lambda$ is sufficiently dense in $\R^{2d}$, 
then ${\mathcal G}(g,\Lambda)$ is a frame for $L^2 (\R^{d})$ (see \cite{Gr1} for details).
\end{rem}

By taking the projective and inductive limits we obtain
$$
W^{s}_{proj}= \varprojlim _{r \to \infty} W^s_r = W\left(L^{\infty}, \mathcal{A}_{proj}^s\right )\;\; {\rm and}
\;\; W^{s}_{ind}= \varinjlim _{r \to 0} W^s_r = W\left(L^{\infty}, \mathcal{A}_{ind}^s\right).
$$ 

We can prove the following Lemma.

\begin{lema}
Let $s>0$. $W^{s}_{proj}$ and $W^{s}_{ind}$ are involutive algebras with respect to convolution. In particular,
\be
\label{NormaWiener}
 \| F*G\|_{W^{s}_{c r}}\lesssim \| F\|_{W^s_{ r}} \| G\|_{W^s_{ r}}, \quad r>0.
\ee where $c\in (0,1]$ as in part $iv)$ of the Lemma \ref{PrvaLema}

\end{lema}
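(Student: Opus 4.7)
The plan is to reduce the convolution inequality \eqref{NormaWiener} to the already established scalar subconvolutivity bound \eqref{Subconvolution} for the weight $m^s_{-r}$, and then to transfer that inequality through the projective and inductive limit constructions. The key simplification is Remark \ref{RemarkWiener}, which identifies $W^s_r$ with $L^\infty_{m^s_r}(\R^{2d})$; this converts the Wiener-amalgam norm into the familiar pointwise quantity $\sup_{x\in\R^{2d}}|F(x)|\,m^s_r(x)$ and bypasses the discrete sequence-space description entirely.

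First I would establish \eqref{NormaWiener} at the level of a single $W^s_r$. From $|F(y)|\leq \|F\|_{W^s_r}\,m^s_{-r}(y)$ pointwise, and the analogous bound for $G$, the triangle inequality under the convolution integral yields
\[
|(F*G)(x)|\leq \|F\|_{W^s_r}\|G\|_{W^s_r}\,(m^s_{-r}*m^s_{-r})(x),\qquad x\in\R^{2d}.
\]
Applying \eqref{Subconvolution}, which gives $(m^s_{-r}*m^s_{-r})(x)\lesssim m^s_{-cr}(x)$ with $c=1$ for $s>1$ and $c=2^{-1/s}$ for $0<s\leq 1$, and then multiplying by $m^s_{cr}(x)$ and taking the supremum, produces \eqref{NormaWiener}.

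Next I would lift the estimate to the limit spaces. For $F,G\in W^s_{proj}$, meaning $F,G\in W^s_r$ for every $r>0$, applying \eqref{NormaWiener} with parameter $r/c$ (for arbitrary $r>0$) places $F*G$ in $W^s_r$; since $r$ was arbitrary, $F*G\in W^s_{proj}$. For $F,G\in W^s_{ind}$, a common parameter $r>0$ can be chosen with $F,G\in W^s_r$, whence $F*G\in W^s_{cr}\subset W^s_{ind}$. The involution is handled by setting $F^*(x):=\overline{F(-x)}$: evenness of $m^s_r$ gives $\|F^*\|_{W^s_r}=\|F\|_{W^s_r}$, so involution preserves each $W^s_r$ and therefore both limit spaces, while the standard identity $(F*G)^*=G^* *F^*$ delivers the algebraic compatibility.

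The main obstacle, if any, is conceptual rather than technical: ensuring that the identification of Remark \ref{RemarkWiener} is indeed a norm equivalence (with uniform constants depending only on the relatively compact set $C$ and the lattice $\Lambda$), not merely a set-theoretic equality. Once that identification is granted, the proof is a direct transcription of the scalar subconvolutivity \eqref{Subconvolution} to the functional setting; no further discretization or covering argument is required, and the constant $c\in(0,1]$ appearing in \eqref{NormaWiener} is inherited verbatim from Lemma \ref{PrvaLema}(iv).
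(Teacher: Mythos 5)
Your argument takes a genuinely different route from the paper's. The paper never passes to the continuous space $L^{\infty}_{m^s_r}(\R^{2d})$ at all: it feeds the discrete convolution estimate $\mathcal{A}_r^s * \mathcal{A}_r^s \hookrightarrow \mathcal{A}_{cr}^s$ of Lemma \ref{PrvaLema}~iv) into Feichtinger's convolution theorem for Wiener amalgam spaces (\cite[Theorem 3]{Fei1}), which gives $W^{s}_{r}*W^{s}_{r}\hookrightarrow W^{s}_{cr}$ directly at the level of the amalgam norms. That route only ever uses the local components $\sup_{Y\in\lambda+C}|F(Y)|$ together with the sequence-space estimate, so it is insensitive to whether the weight $m^s_r$ is moderate.

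The obstacle you flag at the end is in fact a genuine gap for part of the stated range of $s$. The identification $W^s_r=L^{\infty}_{m^s_r}(\R^{2d})$ of Remark \ref{RemarkWiener} is a norm equivalence only when $m^s_r(Y)=e^{r|Y|^{1/s}}$ is comparable, uniformly in $\lambda$, on the translates $\lambda+C$. This holds for $s\geq 1$, where $t\mapsto t^{1/s}$ is subadditive and $m^s_r$ is submultiplicative, hence moderate; but it fails for $0<s<1$, since then $|Y|^{1/s}-|\lambda|^{1/s}$ grows like $s^{-1}|\lambda|^{1/s-1}|Y-\lambda|$ and is unbounded over $Y\in\lambda+C$ as $|\lambda|\to\infty$, so the ratio $m^s_r(Y)/m^s_r(\lambda)$ is unbounded and the two norms are inequivalent (the inclusion $W^s_r\subset L^{\infty}_{m^s_r}$ already fails for fixed $r$). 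Consequently your opening pointwise bound $|F(Y)|\leq\|F\|_{W^s_r}\,m^s_{-r}(Y)$ is not available for $0<s<1$, which is the super-exponential regime the lemma is meant to cover. The argument can be repaired without \cite{Fei1} by tracking the loss: one has $W^s_r\hookrightarrow L^{\infty}_{m^s_{r'}}$ and $L^{\infty}_{m^s_{r}}\hookrightarrow W^s_{r''}$ with $r',r''$ proportional to $r$ (constants depending on $s$ and $C$), and running your computation through these embeddings still yields an estimate of the form \eqref{NormaWiener}, but with a smaller admissible $c$ than the one in Lemma \ref{PrvaLema}~iv) --- so the constant is not ``inherited verbatim'' as you claim. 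Your treatment of the projective and inductive limits and of the involution is fine once a single-$r$ estimate of this form is in hand.
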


\begin{proof}
Directly form the definition it follows that $\dss W^s_r $  is an involutive algebra. 

Note that part $iv)$ of Lemma \ref{PrvaLema} implies that $\dss \mathcal{A}_r^s*\mathcal{A}_r^s\hookrightarrow \mathcal{A}_{c r}^s$. Then \cite[Theorem 3]{Fei1} implies that $W^{s}_{ r}*W^{s}_{ r}\hookrightarrow W^{s}_{c r}$ and the statement follows.
\end{proof}

\par 

\subsection{Modulation spaces related to $\mathcal{A}_r^s$}
We end the section by introducing modulation spaces with respect to  $\mathcal A^s_r$. For a symbol $a$ on $\R^{2d}$ let we define
$$
\mathcal G(a)(Y)=
\sup_{X\in\RR^{2d}}|\mathcal V_{\Phi}(a)(X,Y)|,\quad Y\in \R^{2d},$$
where  $\Phi = W(g,g), $ for a given window  $g\in\mathcal S(\RR^d) \setminus \{ 0\}.$

We say that $a\in \widetilde{M}^{\infty,\mathcal A^s_r}(\R^{2d}) $  if $G(a)\circ j \in W^s_{r}$, where $j(\xi,\eta)=(\eta, -\xi)$, $(\xi,\eta)\in \R^{2d}$. 

The space  $\widetilde{M}^{\infty,\mathcal A^s_r}(\R^{2d}) $ is a Banach space with the norm given by
$$\dss \|\sigma\|_{\widetilde{M}^{\infty,\mathcal A^s_r}}=\|G(\sigma)\circ j\|_{W^s_{r}}.$$ 
To shorten the notation we will write $ \dss \widetilde{M}^{\infty,\mathcal A^s_r}(\R^{2d}):=\widetilde{M}_{r,s}^{\infty}$.

Again, we consider the corresponding  projective and inductive limit spaces
$$\widetilde{M}_{proj,s}^{\infty}= \varprojlim _{r \to \infty} \widetilde{M}_{r,s}^{\infty}\quad \text{ and}\quad \widetilde{M}_{ind,s}^{\infty}= \varinjlim _{r \to 0} \widetilde{M}_{r,s}^{\infty}.$$

It can be proved that these 
spaces are related to the standard modulation spaces as given in \eqref{mod-space-norm} in the following way.

\begin{prop}

Let $g\in{\mathcal S}(\R^{d})\backslash\{0\}$, and $\Phi = W(g,g)$. Then

$$\dss \widetilde{M}_{proj,s}^{\infty}=\bigcap_{r>0} M_{1\otimes m^s_r}^{\infty,\infty}\quad { and}\quad \dss \widetilde{M}_{ind,s}^{\infty}=\bigcup_{r>0} M_{1\otimes m^s_r}^{\infty,\infty},$$ where

$$M_{1\otimes m^s_r}^{\infty,\infty}=\{f\in \mathcal{S}'(\R^{2d})\,|\,\sup_{X,\Xi\in \R^{2d} }|\mathcal{V}_\Phi  f(X,\Xi)|e^{r|\Xi|^{1/s}}<\infty \}.$$
\end{prop}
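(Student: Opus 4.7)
The plan is to reduce the proposition to the identification
$$\widetilde{M}_{r,s}^{\infty}=M_{1\otimes m^s_r}^{\infty,\infty}$$
for each fixed $r,s>0$; once that identity is established, taking projective and inductive limits in $r$ yields both assertions immediately.

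First, I would unfold the definition of $\widetilde{M}_{r,s}^{\infty}$. By construction, $a\in\widetilde{M}_{r,s}^{\infty}$ iff $G(a)\circ j\in W^s_r$, where $G(a)(Y)=\sup_{X\in\R^{2d}}|\mathcal V_\Phi a(X,Y)|$ and $\Phi=W(g,g)$. By Remark \ref{RemarkWiener}, the Wiener-amalgam space collapses to the weighted $L^{\infty}$ space: $W^s_r=L^{\infty}_{m^s_r}(\R^{2d})$. Hence
$$\|a\|_{\widetilde{M}_{r,s}^{\infty}}=\|G(a)\circ j\|_{W^s_r}=\sup_{Y\in\R^{2d}}\Bigl(\sup_{X\in\R^{2d}}|\mathcal V_\Phi a(X,j(Y))|\Bigr)e^{r|Y|^{1/s}}.$$

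The key observation is that $j(\xi,\eta)=(\eta,-\xi)$ is an orthogonal involution of $\R^{2d}$; in particular $|j(Y)|=|Y|$, so $m^s_r\circ j=m^s_r$, and the substitution $\Xi=j(Y)$ is a measurable bijection preserving the weight. This gives
$$\|a\|_{\widetilde{M}_{r,s}^{\infty}}=\sup_{X,\Xi\in\R^{2d}}|\mathcal V_\Phi a(X,\Xi)|\,e^{r|\Xi|^{1/s}},$$
which is precisely the $M_{1\otimes m^s_r}^{\infty,\infty}$-norm computed with the window $\Phi\in\mathcal S(\R^{2d})\setminus\{0\}$. Thus the two Banach spaces coincide with equal norms. (Had one insisted on a generic Schwartz window, the window-independence of the modulation space norm would give only equivalence, which still suffices.)

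Finally, I would pass to the limit. Since the identity $\widetilde{M}_{r,s}^{\infty}=M_{1\otimes m^s_r}^{\infty,\infty}$ holds for every $r>0$ with compatible norms, the projective limit $\varprojlim_{r\to\infty}\widetilde{M}_{r,s}^{\infty}$ coincides set-theoretically and topologically with $\bigcap_{r>0}M_{1\otimes m^s_r}^{\infty,\infty}$ endowed with the intersection topology, and similarly $\varinjlim_{r\to 0}\widetilde{M}_{r,s}^{\infty}=\bigcup_{r>0}M_{1\otimes m^s_r}^{\infty,\infty}$ as locally convex spaces. The only genuinely nontrivial ingredient is Remark \ref{RemarkWiener} identifying $W^s_r$ with $L^{\infty}_{m^s_r}$; apart from that, the argument is essentially bookkeeping, so I expect no serious obstacle beyond the routine verification that the change of variables $\Xi=j(Y)$ preserves both the supremum and the weight.
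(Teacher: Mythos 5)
Your proposal is correct and is precisely the intended verification: the paper states this proposition without proof ("It can be proved that\dots"), and your argument --- unfolding the definition of $\widetilde{M}_{r,s}^{\infty}$, invoking the identification $W^s_r=L^{\infty}_{m^s_r}(\R^{2d})$ from the remark on Wiener amalgam spaces, and using that $j$ is a linear isometry so that $m^s_r\circ j=m^s_r$ --- yields $\widetilde{M}_{r,s}^{\infty}=M^{\infty,\infty}_{1\otimes m^s_r}$ with equal norms (no window-independence needed, since the proposition defines the right-hand space with the same window $\Phi=W(g,g)$), after which the intersection and union over $r>0$ follow from the definitions of the projective and inductive limits. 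No gaps.
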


The class of symbols $\widetilde{M}^{\infty,\mathcal A^s_r}(\R^{2d}) $ is an appropriate choice when dealing with pseudo-differential operators on  Gelfand-Shilov type spaces with H\"ormander metrics.

\section{Approximate diagonalization in Gelfand-Shilov spaces} \label{gelfand-shilov}

In this section we first recall the definition of standard Gelfand-Shilov spaces, and then recall the results from 
\cite{CNR} and \cite{CR} which are necessary when extending results from Section \ref{extension} to weights that decays 
at infinity faster then the inverse of any polynomial. Basic facts of  Gelfand-Shilov spaces can be found in the original source \cite{GS}.


Let $s,t,A,B>0$. We start with the Banach space  $S_{t, B}^{s, A}\left(\mathbb{R}^d\right)$
  consisting of functions $f\in C^\infty(\RR^{d})$ with the finite norm
$$||f||_{\SSS^{s,A}_{t,B}}=\sup_{\alpha,\beta \in\NN^d_0, x\in\RR^d}\frac{\left|x^\alpha \partial^\beta f(x)\right|}{ A^{|\alpha|} B^{|\beta|} \alpha ! ^t \beta !^s}.
$$

Then, by taking projective and inductive limits we obtain
$$
\Sigma_t^s= \varprojlim _{A>0, B>0} S_{t, B}^{s, A} ; \quad S_t^s= \varinjlim _{A>0, B>0} S_{t, B}^{S, A} \text {. }
$$

Let us denote $S_{t,*}^{s}$ for $\Sigma_t^s$ or $S_t^s$. Also, the usual notation for isotropic spaces 
is $\SSS^s(\R^d)$ for $\SSS^s_s(\R^d), s\geq 1/2,$ and $\Sigma^s(\R^d)$ for $\Sigma^s_s(\R^d), s>1/2.$

The following results from \cite{CR} give rise to approximate diagonalization when considering the Euclidean metrics.

\begin{te} (\cite[Theorem 5.2.7]{CR}) \label{thm-prva}
Let  $s>0, m \in \mathcal{M}_v\left(\mathbb{R}^d\right), g \in M_{v \otimes 1}^1\left(\mathbb{R}^d\right) \backslash\{0\}$ such that
$$
\left\|\partial^\alpha g\right\|_{L_v^1\left(\mathbb{R}^d\right)} \lesssim C^{|\alpha|}(\alpha !)^s, \quad \alpha \in \mathbb{N}^d,
$$ 
for some $C>0$. If $f \in C^{\infty}\left(\mathbb{R}^d\right)$ the following conditions are equivalent:
\begin{itemize}
\item[(i)] There exists a constant $C>0$ such that
$$
\left|\partial^\alpha f(x)\right| \lesssim m(x) C^{|\alpha|}(\alpha !)^s, \quad x \in \mathbb{R}^d, \alpha \in \mathbb{N}^d .
$$
\item[(ii)] There exists a constant $\epsilon>0$ such that
$$
\left|V_g f(x, \xi)\right| \lesssim m(x) e^{-\epsilon|\xi|^{\frac{1}{s}}}, \quad x, \xi \in \mathbb{R}^{2 d}, \alpha \in \mathbb{N}^d .
$$
\end{itemize}
\end{te}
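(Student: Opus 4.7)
The plan is to prove both implications by linking pointwise derivative estimates on $f$ with STFT decay via (a) integration by parts and (b) the STFT inversion formula, exploiting the fact that the window $g$ itself lies in a weighted Gevrey class in $L^1_v$. The bridge between polynomial-in-$\alpha$ bounds of type $C^{|\alpha|}(\alpha!)^s$ and exponential decay $e^{-\epsilon t^{1/s}}$ rests on the classical Gevrey identity $\inf_{n\in\mathbb{N}_0} R^n(n!)^s/t^n \asymp e^{-\epsilon t^{1/s}}$ for suitable $R,\epsilon>0$, applied componentwise.

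For the direction $(i) \Rightarrow (ii)$, I would apply repeated integration by parts in the defining integral of $V_g f$ to transfer $\alpha$ derivatives from the oscillatory factor onto $f(t)\overline{g(t-x)}$, obtaining
$$(2\pi i\xi)^\alpha V_g f(x,\xi) = \int_{\mathbb{R}^d} \partial^\alpha_t\bigl[f(t)\overline{g(t-x)}\bigr]\, e^{-2\pi i t\xi}\, dt.$$
A Leibniz expansion, hypothesis (i), and the moderateness inequality $m(t) \lesssim v(t-x) m(x)$ then yield
$$|V_g f(x,\xi)|\,|2\pi\xi|^{|\alpha|} \lesssim m(x) \sum_{\beta\leq\alpha}\binom{\alpha}{\beta} C^{|\beta|}(\beta!)^s\, C^{|\alpha-\beta|}((\alpha-\beta)!)^s \lesssim m(x)\, C_1^{|\alpha|}(\alpha!)^s,$$
where the last step uses $\beta!(\alpha-\beta)! \leq \alpha!$ to collapse the binomial sum. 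Optimizing over $\alpha$ via the Gevrey identity then delivers (ii).

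For the direction $(ii) \Rightarrow (i)$, I would exploit the STFT inversion formula
$$f(t) = \frac{1}{\|g\|_2^2}\int_{\mathbb{R}^{2d}} V_g f(x,\xi)\, e^{2\pi i t\xi}\, g(t-x)\, dx\, d\xi,$$
whose pointwise validity in the weighted setting is underpinned by $g \in M^1_{v\otimes 1}$ and the exponential decay of $V_g f$ in $\xi$. Differentiating under the integral sign and applying Leibniz split $\partial^\alpha f(t)$ into a finite sum of terms of the form
$$\binom{\alpha}{\beta}(2\pi i)^{|\beta|}\int_{\mathbb{R}^{2d}} V_g f(x,\xi)\,\xi^\beta e^{2\pi i t\xi}\,\partial^{\alpha-\beta} g(t-x)\,dx\,d\xi.$$
Using (ii) together with moderateness, the $x$-integration contributes $\lesssim m(t)\,\|\partial^{\alpha-\beta} g\|_{L^1_v} \lesssim m(t)\,C^{|\alpha-\beta|}((\alpha-\beta)!)^s$, while the $\xi$-integral $\int_{\mathbb{R}^d} e^{-\epsilon|\xi|^{1/s}}|\xi|^{|\beta|}\,d\xi$ is evaluated in polar coordinates with the substitution $u = \epsilon r^{1/s}$ and a Gamma function identity to give $\lesssim C^{|\beta|}(\beta!)^s$. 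Summing over $\beta$ as before yields (i).

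The main obstacle I anticipate is constant tracking: $C$, $\epsilon$, and the moderate weight factors must be orchestrated so that the Gevrey index $s$ comes out identical on both sides of the equivalence. In particular, the exponent $1/s$ in (ii) only arises cleanly because the $L^1_v$-Gevrey bound on $\partial^\alpha g$ carries the same index $s$; a larger index on $g$ would propagate into a weaker decay rate for $V_g f$. A secondary technical point is justifying differentiation under the inversion integral together with the initial integrations by parts, which requires mild decay/smoothness of the integrands already implicit in the hypotheses on $f$ and $g$.
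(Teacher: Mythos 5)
The paper does not prove this statement: it is quoted directly from \cite{CR} (Theorem 5.2.7) as background material, with no argument supplied. Your outline --- integration by parts plus Leibniz and $v$-moderateness for (i)$\Rightarrow$(ii), the STFT inversion formula with the Gamma-function evaluation of $\int_{\mathbb{R}^d}|\xi|^{|\beta|}e^{-\epsilon|\xi|^{1/s}}\,d\xi$ for (ii)$\Rightarrow$(i), and the Gevrey identity $\inf_{n} R^{n}(n!)^{s}t^{-n}\asymp e^{-\epsilon t^{1/s}}$ as the bridge --- is exactly the standard argument used in that cited source, and it is correct.
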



\begin{te}(\cite[Theorem 5.2.10]{CR}) \label{thm-druga}
 Let $s \geq 1 / 2$ and $m \in \mathcal{M}_v\left(\mathbb{R}^{2 d}\right)$.
\begin{itemize}
\item[$a)$] If $1 / 2 \leq s$ let $g \in S_s^s\left(\mathbb{R}^d\right)$.
\item[$b)$] If $1/2<s$ let $g \in \Sigma_s^s\left(\mathbb{R}^d\right)$.
\end{itemize}
Assume the following growth condition  on the weight $v$ :
$$
v(z) \lesssim e^{\epsilon|z|^{1 / s}}, \quad z \in \mathbb{R}^{2 d},
$$
for every $\epsilon>0$.

Let  $a \in C^{\infty}\left(\mathbb{R}^{2 d}\right)$. Then the following is equivalent:
\begin{itemize}
\item[(i)] The symbol $a$ satisfies
$$
\left|\partial^\alpha a(z)\right| \lesssim m(z) C^{|\alpha|}(\alpha !)^s, \quad z \in \mathbb{R}^{2 d},  \alpha \in \mathbb{N}^{2 d} .
$$
\item[(ii)] There exists $\epsilon>0$ such that
\begin{equation}
\label{ocenaAlmost}
\left|\left\langle a^{w} \pi(z) g, \pi(w) g\right\rangle\right| \lesssim m\left(\frac{w+z}{2}\right) e^{-\epsilon|w-z|^{\frac{1}{s}}}, \quad  z, w \in \mathbb{R}^{2 d} .
\end{equation}
\end{itemize}
\end{te}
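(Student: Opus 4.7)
The plan is to reduce the claimed equivalence to Theorem \ref{thm-prva}, applied in dimension $2d$ to the symbol $a$, via the symplectic identity recalled just before the statement:
\[
\left|\left\langle a^w \pi(z) g, \pi(w) g\right\rangle\right| = \left|\mathcal{V}_\Phi a\!\left(\tfrac{z+w}{2},\, j(w-z)\right)\right|, \qquad z,w \in \mathbb{R}^{2d},
\]
where $\Phi = W(g,g)$. Since $j$ is an orthogonal map on $\mathbb{R}^{2d}$, the change of variables $U = (z+w)/2$, $V = j(w-z)$ is an isometry of $\mathbb{R}^{4d}$ onto itself, so the off-diagonal decay \eqref{ocenaAlmost} is equivalent to the weighted STFT decay
\[
|\mathcal{V}_\Phi a(U,V)| \lesssim m(U)\, e^{-\epsilon|V|^{1/s}}, \qquad U,V \in \mathbb{R}^{2d}.
\]

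First I would verify that the window $\Phi = W(g,g)$ is admissible for Theorem \ref{thm-prva} on $\mathbb{R}^{2d}$, that is, $\Phi \in M^1_{v\otimes 1}(\mathbb{R}^{2d}) \setminus\{0\}$ and
\[
\|\partial^\alpha \Phi\|_{L^1_v(\mathbb{R}^{2d})} \lesssim C^{|\alpha|}(\alpha!)^s, \qquad \alpha \in \mathbb{N}_0^{2d}.
\]
In case (a), $g \in S_s^s(\mathbb{R}^d)$ implies $W(g,g) \in S_s^s(\mathbb{R}^{2d})$, and analogously for $\Sigma_s^s$ in case (b); the hypothesis $v(z) \lesssim e^{\epsilon|z|^{1/s}}$ for every $\epsilon>0$ is then absorbed by the super-exponential decay of $\Phi$ and its derivatives, giving the required $L^1_v$-bound.

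For (i) $\Rightarrow$ (ii), I would apply Theorem \ref{thm-prva} in dimension $2d$ with window $\Phi$ and weight $m$ to convert the Gevrey bound on $a$ directly into the STFT decay displayed above, and then substitute $U = (z+w)/2$, $V = j(w-z)$ into the symplectic identity to recover \eqref{ocenaAlmost}. For the converse, I would read the same identity backwards in the form
\[
|\mathcal{V}_\Phi a(U,V)| = \left|\left\langle a^w \pi\!\left(U - \tfrac{j^{-1}(V)}{2}\right) g,\, \pi\!\left(U + \tfrac{j^{-1}(V)}{2}\right) g\right\rangle\right|,
\]
apply \eqref{ocenaAlmost} on the right (whose midpoint is $U$ and whose offset length is $|V|$), and invoke the reverse implication of Theorem \ref{thm-prva} to recover (i).

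The main obstacle is the admissibility verification of $\Phi = W(g,g)$: one has to show the weighted Gevrey-type $L^1_v$-bound on every partial derivative. Differentiating under the integral in the definition of $W(g,g)$ yields a finite sum of Wigner-type integrals of derivatives of $g$ of order at most $|\alpha|$; the Gelfand--Shilov bounds on $g$ produce pointwise super-exponential decay with a Gevrey factor $C^{|\alpha|}(\alpha!)^s$, and integration against $v$ is harmless thanks to the subexponential growth hypothesis. A minor subtlety in case (b) is that $\Sigma_s^s(\mathbb{R}^d)$ contains a nonzero element only for $s > 1/2$, which is consistent with the strict inequality in part (b) of the statement.
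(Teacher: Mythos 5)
The paper does not prove this statement itself---it is quoted directly from \cite[Theorem 5.2.10]{CR}---but your reduction, namely rewriting $\langle a^{w}\pi(z)g,\pi(w)g\rangle$ as $\mathcal{V}_{\Phi}a\left(\tfrac{z+w}{2},j(w-z)\right)$ with $\Phi=W(g,g)$, using that $j$ is an isometry, and then invoking Theorem \ref{thm-prva} in dimension $2d$ with window $\Phi$, is exactly the standard argument behind the cited result and is consistent with the symplectic identity the paper records in its preliminaries. Your identification of the window admissibility of $W(g,g)$ (the Gevrey-type $L^1_v$ bounds on its derivatives, absorbed by the subexponential growth of $v$) as the main point to verify is also the right one, so the proposal is correct and essentially coincides with the intended proof.
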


\begin{te} (\cite[Theorem 5.2.12]{CR}) \label{thm-treca}
Let $s \geq 1 / 2$, $g \in S_s^s\left(\mathbb{R}^d\right)$,  $m \in \mathcal{M}_v\left(\mathbb{R}^{2 d}\right)$, and let $\mathcal{G}(g, \Lambda)$ be a Gabor superframe for $L^2\left(\mathbb{R}^d\right)$. If $a \in {C}^{\infty}\left(\mathbb{R}^{2 d}\right)$, thne the following properties are equivalent:
\begin{itemize}
\item[(i)]
 There exists $\epsilon>0$ such that the estimate \eqref{ocenaAlmost} holds.
\item[(ii)] There exists $\epsilon>0$ such that
$$
\left|\left\langle a^{w}\pi(\mu) g, \pi(\lambda) g\right\rangle\right| \lesssim m\left(\frac{\lambda+\mu}{2}\right) e^{-\epsilon|\lambda-\mu|^{\frac{1}{s}}}, \quad  \lambda, \mu \in \Lambda.
$$
\end{itemize}
\end{te}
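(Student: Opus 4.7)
The direction (i) $\Rightarrow$ (ii) is immediate: restrict the continuous estimate \eqref{ocenaAlmost} to the lattice points $z=\mu$ and $w=\lambda$ in $\Lambda$, and (ii) follows with the same $\epsilon>0$. The content of the theorem is the converse. My plan for (ii) $\Rightarrow$ (i) is to use a double Gabor expansion with a dual window, reduce the continuous matrix coefficient to a sum of discrete ones, and finally absorb the auxiliary weights by means of $v$-moderateness together with a quasi-triangle inequality for the function $t\mapsto |t|^{1/s}$.

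\textbf{Step 1: Dual window.} Since $\mathcal{G}(g,\Lambda)$ is a superframe with $g\in S_s^s(\mathbb R^d)$, the canonical dual window $\gamma$ belongs to $S_s^s(\mathbb R^d)$ as well. Hence, by Theorem \ref{thm-prva} applied to $g$ and $\gamma$, the cross STFT $V_\gamma g$ decays subexponentially of order $1/s$, so for some $\epsilon_0>0$
$$
\bigl|\langle \pi(z)g,\pi(\mu)\gamma\rangle\bigr|=|V_\gamma g(\mu-z)|\cdot(\text{phase})\lesssim e^{-\epsilon_0|\mu-z|^{1/s}},\qquad z\in\mathbb R^{2d},\ \mu\in\Lambda,
$$
and analogously for $|\langle\pi(w)g,\pi(\lambda)\gamma\rangle|$. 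The constants can be chosen so that $\epsilon_0$ is as large as we please at the cost of the implicit constant, since $g,\gamma\in S_s^s$.

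\textbf{Step 2: Double expansion.} By the frame expansion $f=\sum_{\mu\in\Lambda}\langle f,\pi(\mu)\gamma\rangle \pi(\mu)g$ in $L^2(\mathbb R^d)$, applied successively to $\pi(z)g$ and $\pi(w)g$, one obtains
\begin{equation*}
\langle a^w\pi(z)g,\pi(w)g\rangle=\sum_{\mu,\lambda\in\Lambda}\langle\pi(z)g,\pi(\mu)\gamma\rangle\overline{\langle\pi(w)g,\pi(\lambda)\gamma\rangle}\langle a^w\pi(\mu)g,\pi(\lambda)g\rangle.
\end{equation*}
Inserting the discrete estimate (ii) and the dual-window decay from Step 1 yields
\begin{equation*}
|\langle a^w\pi(z)g,\pi(w)g\rangle|\lesssim\sum_{\mu,\lambda\in\Lambda} e^{-\epsilon_0|\mu-z|^{1/s}}e^{-\epsilon_0|\lambda-w|^{1/s}}m\!\left(\tfrac{\lambda+\mu}{2}\right)e^{-\epsilon|\lambda-\mu|^{1/s}}.
\end{equation*}

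\textbf{Step 3: Absorb the weight and extract the diagonal decay.} Using $v$-moderateness of $m$,
$$
m\!\left(\tfrac{\lambda+\mu}{2}\right)\lesssim m\!\left(\tfrac{z+w}{2}\right)v\!\left(\tfrac{\mu-z}{2}\right)v\!\left(\tfrac{\lambda-w}{2}\right),
$$
and the growth bound $v(u)\lesssim e^{\delta|u|^{1/s}}$ (valid for every $\delta>0$ by the hypothesis on $v$), we can charge the $v$-factors against an arbitrarily small portion of the two exponentials in $|\mu-z|^{1/s}$ and $|\lambda-w|^{1/s}$. Next, for $s\geq 1/2$ the map $t\mapsto |t|^{1/s}$ is quasi-subadditive: there exists $C_s>0$ such that
$$
|w-z|^{1/s}\leq C_s\bigl(|w-\lambda|^{1/s}+|\lambda-\mu|^{1/s}+|\mu-z|^{1/s}\bigr),\qquad z,w\in\mathbb R^{2d},\ \lambda,\mu\in\Lambda.
$$
Splitting each exponential factor as $e^{-\epsilon'|\cdot|^{1/s}}\cdot e^{-(\epsilon-\epsilon')|\cdot|^{1/s}}$, I use the first piece (through the above inequality) to produce a uniform factor $e^{-\epsilon''|w-z|^{1/s}}$ in front of the sum, and leave the second piece to guarantee absolute convergence of the double series over $\Lambda\times\Lambda$. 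This yields
$$
|\langle a^w\pi(z)g,\pi(w)g\rangle|\lesssim m\!\left(\tfrac{z+w}{2}\right)e^{-\epsilon''|w-z|^{1/s}},
$$
which is exactly (i).

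\textbf{Main obstacle.} The technical crux is bookkeeping the exponents so that (a) the dual-window decay rate $\epsilon_0$ dominates the possibly large $\delta$ needed to absorb $v$, and (b) enough decay is preserved after using the quasi-triangle inequality to ensure summability while still producing a genuine positive $\epsilon''$ in the final exponent; for $s<1$ (so $1/s>1$) the quasi-subadditivity constant $C_s>1$ forces $\epsilon''<\epsilon/C_s$, and this is the place where the constant degrades compared to the discrete estimate. Once the exponents are balanced, the rest is a routine convergent geometric-type sum over the lattice.
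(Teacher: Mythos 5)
A preliminary remark: this paper does not prove Theorem \ref{thm-treca}; it states it as a citation of \cite[Theorem 5.2.12]{CR}. So the comparison below is with the argument in the cited source rather than with a proof contained in the paper itself. Your direction (i)$\Rightarrow$(ii) is correct and trivial, and the skeleton of your (ii)$\Rightarrow$(i) — double Gabor expansion of $\pi(z)g$ and $\pi(w)g$, insertion of the discrete estimate, absorption of $m\bigl(\tfrac{\lambda+\mu}{2}\bigr)$ via $v$-moderateness and the growth bound on $v$, and extraction of $e^{-\epsilon''|w-z|^{1/s}}$ by quasi-subadditivity of $|\cdot|^{1/s}$ — is the right one; the Step 2--3 bookkeeping is sound, and the loss $\epsilon''<\epsilon/C_s$ for $s<1$ is harmless since only some $\epsilon>0$ is required.

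The genuine gap is Step 1. You assert without justification that the canonical dual window $\gamma$ of $\mathcal{G}(g,\Lambda)$ again lies in $S_s^s(\mathbb{R}^d)$, and the entire argument rests on the resulting bound $|V_\gamma g(\mu-z)|\lesssim e^{-\epsilon_0|\mu-z|^{1/s}}$. For $s\geq 1$ this follows from spectral invariance of the frame operator for subexponential weights (Gr\"ochenig--Leinert), but for $1/2\leq s<1$ the weights $e^{r|\cdot|^{1/s}}$ are superexponential, violate the Gelfand--Raikov--Shilov condition, and no Wiener-lemma argument yields $\gamma\in S_s^s$; this is precisely the hard point of the theorem. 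It is also why the hypothesis that $\mathcal{G}(g,\Lambda)$ is a \emph{superframe} appears — a hypothesis your proof never invokes, which is the telltale sign. In \cite{CR} and \cite{CNR} the argument runs through the identity $|\langle a^w\pi(z)g,\pi(w)g\rangle|=\bigl|\mathcal{V}_{W(g,g)}a\bigl(\tfrac{z+w}{2},j(w-z)\bigr)\bigr|$ recorded in Section 2 of this paper: the discrete matrix is the restriction of $\mathcal{V}_\Phi a$, $\Phi=W(g,g)$, to the transformed lattice $\tilde\Lambda=\bigl\{\bigl(\tfrac{\lambda+\mu}{2},j(\lambda-\mu)\bigr)\bigr\}$, and the superframe property of $\mathcal{G}(g,\Lambda)$ is exactly what makes $\mathcal{G}(\Phi,\tilde\Lambda)$ a Gabor frame for $L^2(\mathbb{R}^{2d})$ with a dual window of the required Gaussian/subexponential time-frequency decay (cf.\ \cite{GLy}, quoted in the Introduction of this paper precisely in connection with superframes and the decay of dual windows). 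Two smaller points: the claim that ``$\epsilon_0$ can be made as large as we please'' is false for a fixed pair $g,\gamma\in S_s^s$ (consider $V_gg$ for a Gaussian), but you do not need it — the hypothesis $v(u)\lesssim e^{\delta|u|^{1/s}}$ for every $\delta>0$ lets you absorb $v$ with $\delta<\epsilon_0$; and the interchange of $a^w$ with the two infinite expansions deserves a sentence (convergence of the frame series in $\mathcal{S}(\mathbb{R}^d)$ together with continuity of $a^w:\mathcal{S}\to\mathcal{S}'$), though that part is routine.
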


Now we state a result on approximate diagonalization in the spirit of \cite{gro-rz}. The proof follows from a carefuli
inspection of the proofs of Theorems \ref{thm-prva}, \ref{thm-druga}, and \ref{thm-treca}, and will be given elsewhere.

Here below  $\dss  \mathcal{A}_{*}^s$ denotes $ \mathcal{A}_{proj}^s$ or  $\mathcal{A}_{ind}^s$,
$W^s_* $ stands for $W^{s}_{proj}$ or $W^{s}_{ind}$, and 
by $\dss \widetilde{M}_{*,s}^{\infty}$ we denote $\widetilde{M}_{proj,s}^{\infty}$ or $\widetilde{M}_{ind,s}^{\infty}$.

\begin{te} \label{GlavnaTeorema}  (\cite[Theorem 5.2.12]{CR})
Let $s\geq 1/2$  and $g\in {\mathcal S}_s ^{s}(\R^d)$. Then the following is equivalent:

\begin{itemize}
\item[a)] $a\in\widetilde M_{*,s}^{\infty}$,

\item[b)] there exists $H\in W^{s}_*$ such that
\begin{equation}\label{prvae}
|\langle a^w\pi(X)g,\pi(Y)g \rangle|\leq H(Y-X), \quad X,Y\in\RR^{2d},
\end{equation}

\item[c)] There exists a sequence $h\in\mathcal A_{*}^s (\Lambda)$ such that
$$
|\langle a^w\pi(\mu)g,\pi(\nu)\gamma \rangle|
\leq h(\nu-\mu), \quad \mu,\nu\in\Lambda.
$$
\end{itemize}
\end{te}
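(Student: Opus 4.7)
The plan is to establish $(a)\Leftrightarrow(b)\Leftrightarrow(c)$ by first converting the operator-theoretic estimates in $(b)$ and $(c)$ into STFT-decay statements through Gr\"ochenig's symplectic identity recorded in Section \ref{preliminaries}, and then transferring between continuous and lattice parameters using the Wiener-amalgam and convolution machinery of $\mathcal A_*^s$ and $W^s_*$ developed in Section \ref{spaces}. For $(a)\Leftrightarrow(b)$, take $\Phi=W(g,g)\in\mathcal S(\R^{2d})$, put $Z=Y-X$, and exploit the identity
\[
\bigl|\langle a^w\pi(X)g,\pi(Y)g\rangle\bigr|=\bigl|\mathcal V_\Phi a\bigl(\tfrac{X+Y}{2},j(Z)\bigr)\bigr|.
\]
Taking the supremum over $X$ with $Z$ fixed produces the minimal dominant $H_0(Z)=(\mathcal G(a)\circ j)(Z)$. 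Since $W^s_r=L^\infty_{m^s_r}(\R^{2d})$ by Remark \ref{RemarkWiener}, the space $W^s_*$ is solid, so $(a)$, which asserts $H_0\in W^s_*$, is equivalent to the existence of any $H\in W^s_*$ dominating the matrix coefficients, that is, $(b)$.

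For $(b)\Rightarrow(c)$, sample at lattice points $(\mu,\nu)\in\Lambda\times\Lambda$; to absorb the change of window $g\mapsto\gamma$ on the right, expand $\pi(\nu)\gamma=\sum_{\lambda\in\Lambda}c_\lambda\pi(\nu+\lambda)g$ through the Gabor (super)frame expansion, observing that $\{c_\lambda\}\in\mathcal A_*^s$ since the canonical dual $\gamma$ inherits sub-exponential STFT decay from $g\in\mathcal S_s^s$ (a consequence of Theorem \ref{thm-prva} applied to the Janssen/Walnut representation of the dual window). Termwise application of $(b)$ together with the convolution stability of $\mathcal A_*^s$ from Lemma \ref{PrvaLema}(iv) yield a dominating sequence $h\in\mathcal A_*^s(\Lambda)$. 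For $(c)\Rightarrow(b)$, expand both $\pi(X)g$ and $\pi(Y)g$ in the Gabor frame $\mathcal G(g,\Lambda)$, estimate each pair of frame contributions by $(c)$, and compress the resulting double sum into a convolution controlled in $W^s_*$ via the amalgam inequality \eqref{NormaWiener}.

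The main obstacle is the geometric loss factor $c=2^{-1/s}\in(0,1)$ appearing in both the sequence-space and amalgam convolution inequalities when $s\in[1/2,1]$: at the level of any fixed $\mathcal A_r^s$ or $W^s_r$ the weight degrades after every convolution, so the argument cannot close in a single Banach building block. This loss is precisely what the projective and inductive limit constructions $\mathcal A_{proj}^s,\mathcal A_{ind}^s$ and their amalgam and modulation-space counterparts were designed to absorb: in the projective case one starts from arbitrarily large $r$, and in the inductive case one tolerates any smaller index $cr$. The remaining technical step, ensuring that the dual window $\gamma$ enjoys the same class of STFT decay as $g$, reduces to Theorems \ref{thm-prva}--\ref{thm-treca} together with the standard Gabor-duality identities, after which the proof collapses to the solidity, involutivity, and convolution closure of $\mathcal A_*^s$ and $W^s_*$ established in Lemma \ref{PrvaLema} and \eqref{NormaWiener}.
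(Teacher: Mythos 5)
The paper does not actually prove Theorem \ref{GlavnaTeorema}: immediately before the statement it says only that ``the proof follows from a careful inspection of the proofs of Theorems \ref{thm-prva}, \ref{thm-druga}, and \ref{thm-treca}, and will be given elsewhere.'' Your outline is essentially that inspection carried out, and it follows the intended Gr\"ochenig--Rzeszotnik route: a)$\Leftrightarrow$b) via the identity $|\langle a^w\pi(X)g,\pi(Y)g\rangle|=|\mathcal V_\Phi a(\tfrac{X+Y}{2},j(Y-X))|$ together with the solidity of $W^s_*=L^\infty_{m^s_*}$ (your observation that $\mathcal G(a)\circ j$ is the minimal dominating $H$ is exactly the right way to see this equivalence), and the lattice equivalences via Gabor expansions compressed by the convolution estimates of Lemma \ref{PrvaLema}~iv) and \eqref{NormaWiener}, with the projective/inductive limits absorbing the index loss $r\mapsto cr$, $c=2^{-1/s}$. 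That last diagnosis --- that the argument cannot close in a single $\mathcal A^s_r$ and that the limit spaces exist precisely to fix this --- is correct and is the main structural point of Section \ref{spaces}.

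Two places in your sketch are thinner than they should be. First, the statement as printed omits the hypothesis (implicit, inherited from Theorem \ref{thm-treca}) that $\mathcal G(g,\Lambda)$ is a Gabor superframe with dual window $\gamma$; without it, c) is not well posed, and your proof of b)$\Rightarrow$c) and c)$\Rightarrow$b) silently uses it. Second, your justification of the time--frequency decay of $\gamma$ --- ``Theorem \ref{thm-prva} applied to the Janssen/Walnut representation of the dual window'' --- does not suffice for $1/2\le s\le 1$: there the weights $e^{r|\cdot|^{1/s}}$ grow at least exponentially and violate the GRS condition, so the spectral-invariance/Neumann-series argument that normally transfers decay from $g$ to $\gamma$ is unavailable. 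This is exactly where the superframe structure with Hermite-type windows (\cite{GLy}, \cite{YuL}, \cite{SW}, recalled in the Introduction) must be invoked, and it is the one genuinely nontrivial ingredient that the deferred proof has to supply; everything else in your argument reduces, as you say, to the solidity, involutivity, and convolution closure established in Lemma \ref{PrvaLema} and \eqref{NormaWiener}.
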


\section{Extensions on the spaces with the H\"ormander metric} \label{extension}

In this section we present some general results from \cite{PPHM} in the context of tempered distributions,
and symbol classes $S(M,g)$, see \eqref{cnbvlj135}. Extension of these results to Gelfand-Shilov spaces and their dual spaces of tempered ultra-distributions is a highly nontrivial task and will be the subject of our future investigations.

Before explaining the main results, we need to recall necessary notions related to the geometry in the observed spaces.
We refer to \cite{lernerB} for more details on the subject.

We assume that a Riemannian metric $g$ on $\mathbb R^{2d}$ is a Borel measurable section of the 2-covariant tensor bundle $T^2T^*\mathbb R^{2d}$ that is symmetric and positive-definite at every point. The corresponding quadratic forms are denoted  by the same symbol: $g_X(T):=g_X(T,T)$, $T\in T_X\mathbb R^{2d}$. For each $X\in \mathbb R^{2d}$, we  identify  $\mathbb R^{2d}$ with $T_X\mathbb R^{2d}$ that sends every $Y\in \mathbb R^{2d}$ to the directional derivative in direction $Y$ at $X$. Let $T\in \mathbb R^{2d}\backslash\{0\}$. We denote by $\partial_T$ the vector field on $\mathbb R^{2d}$ given by the directional derivative in direction $T$ at every point $X\in \mathbb R^{2d}$.  We denote by $\sigma:\mathbb R^{2d}\rightarrow \mathbb R^{2d}$ the isomorphism induced by the symplectic form; notice that ${}^t\sigma=-\sigma$, $\sigma_X  \in L(\R^{2d}, \R)$, $\sigma_X (T)=[X,T]$, $T\in \R^{2d}$. Let $g$ be a Riemannian metric on $\mathbb R^{2d}$ and, for $X\in \mathbb R^{2d}$, denote by $Q_X:\mathbb R^{2d}\rightarrow \mathbb R^{2d}$ the isomorphism induced by $g_X$. In particular, $Q_X$ is $2d\times 2d$ diagonal matrix with elements $q_X(E_j, E_j)$ where $\{E_j\}_{j=1}^{2d}$ is the basis of $\R^{2d}$.

For $X\in \mathbb R^{2d}$, set $Q^{\sigma}_X:={}^t\sigma Q_X^{-1}\sigma:\mathbb R^{2d}\rightarrow \mathbb R^{2d}$ and let $g^{\sigma}_X(T,S):=\langle Q^{\sigma}_XT,S\rangle$, $T,S\in \mathbb R^{2d}$. Then $g^{\sigma}$ is again a Riemannian metric on $\mathbb R^{2d}$ called the symplectic dual of $g$; it is also given by 
$$g^{\sigma}_X(T)=\sup_{S\in \mathbb R^{2d}\backslash\{0\}} [T,S]^2/g_X(S).$$ 

The Riemannian metric $g$ is said to be a H\"ormander metric \cite{hormander} (i.e., an admissible metric in the terminology of \cite{bony,lernerB}) if the following three conditions are satisfied:
\begin{itemize}
\item[$(i)$] (slow variation) there exist $C_0\geq 1$ and $r_0>0$ such that
    $$
    g_X(X-Y)\leq r^2_0\Rightarrow C^{-1}_0g_Y(T)\leq g_X(T)\leq C_0g_Y(T),
    $$
for all $ X,Y,T\in \mathbb R^{2d}$;
\item[$(ii)$] (temperance) there exist $C_0\geq 1$ and $N_0\geq 0$ such that
    $$
    \left(g_X(T)/g_Y(T)\right)^{\pm 1}\leq C_0(1+g^{\sigma}_X(X-Y))^N,\quad \mbox{for all}\,\, X,Y,T\in \mathbb R^{2d};
    $$
\item[$(iii)$] (the uncertainty principle) $g_X(T)\leq g^{\sigma}_X(T)$, for all $X,T\in \mathbb R^{2d}$.
\end{itemize}

Next we need an admissibility condition.
For a given metric $g$, a positive Borel measurable function $M$ on $\mathbb R^{2d}$ is said to be $g$-admissible if there are $C\geq 1$, $r>0$ and $N\geq 0$ such that
\begin{gather*}
g_X(X-Y)\leq r^2\Rightarrow C^{-1}M(Y)\leq M(X)\leq CM(Y), \quad \text{and}\\
\left(M(X)/M(Y)\right)^{\pm1}\leq C(1+g^{\sigma}_X(X-Y))^N,\quad \mbox{for all}\,\, X,Y\in \mathbb R^{2d}.
\end{gather*}
All the constants mentioned above are called \textit{admissibility constants}.
 
Given a $g$-admissible weight $M$, the space of symbols $S(M,g)$ is defined as the space of all $a\in\mathcal{C}^{\infty}(\mathbb R^{2d})$ for which
\beq\label{cnbvlj135}
\|a\|^{(k)}_{S(M,g)}=\sup_{l\leq k}\sup_{\substack{X\in \mathbb R^{2d}\\ T_1,\ldots, T_l\in \mathbb R^{2d}\backslash\{0\}}}\frac{|a^{(l)}(X;T_1,\ldots,T_l)|} {M(X)\prod_{j=1}^lg_X(T_j)^{1/2}}<\infty,\quad \forall k\in\NN.
\eeq

With this system of seminorms, $S(M,g)$ becomes a Fr\'echet space. One can always regularize the metric making it smooth without changing the notion of $g$-admissibility of a weight and the space $S(M,g)$; the same can be done for any $g$-admissible weight (see \cite{hormander}).

For any symbol $a\in\SSS(\mathbb R^{2d})$, we consider the Weyl quantization $a^w$, see \eqref{Weyl-PsiDO}
This correspondence extends to symbols in $\SSS'(\mathbb R^{2d})$ in a usual manner, and in 
this case $a^w:\SSS(\mathbb R^{2d})\rightarrow \SSS'(\mathbb R^{2d})$ is a continuous mapping. 
For any $a\in S(M,g)$ with  a $g$-admissible weight $M$, the operator $a^w$ is continuous on $\SSS(\mathbb R^{2d})$ and uniquely extends to an operator on $\SSS'(\mathbb R^{2d})$ (cf. \cite{hormander}).

\subsection{The symplectic short-time Fourier transform} We  extended in \cite{PP} the short time Fourier transform to the spaces of functions over the ground space equipped with the H\"ormader metrics with the aim to analyze the almost diagonalization of a class of pseudo differential operators that is,  the estimate of  the action of a pseudo differential operator on  wave packages accommodated to the involved metrics.

We denote by $\mathcal{F}_{\sigma}$ the symplectic Fourier transform on $\mathbb R^{2d}$:
$$
\mathcal{F}_{\sigma}f(X)=\int_{\mathbb R^{2d}} e^{-2\pi i [X,Y]} f(Y)dY,\quad f\in L^1(\mathbb R^{2d});
$$
recall that $\mathcal{F}_{\sigma}\mathcal{F}_{\sigma}=\operatorname{Id}$. ($i [X,Y]$ is the symplectic product) \\
\indent Let $\bold{\varphi}\in\SSS(\mathbb R^{2d})$ and set $\varphi_X:=\bold{\varphi}(X)$, $X\in \mathbb R^{2d}$. We define the \textit{symplectic short-time Fourier transform} $\VV_{\bold{\varphi}}f$ of $f\in \SSS'(\mathbb R^{2d})$ with respect to $\bold{\varphi}$ as
$$
\VV_{\bold{\varphi}}f(X,\Xi):=\mathcal{F}_{\sigma}(f\overline{\varphi_X})(\Xi)=\langle f,e^{-2\pi i [\Xi,\cdot]}\overline{\varphi_X}\rangle,\quad X,\Xi\in \mathbb R^{2d}.
$$
When $f\in L^1_{(1+|\cdot|)^{-s}}(\mathbb R^{2d})$ for some $s\geq 0$ ($|\cdot|$ is (any) norm on $\mathbb R^{2d}$), 
we have
$$
\VV_{\bold{\varphi}}f(X,\Xi)=\int_{\mathbb R^{2d}} e^{-2\pi i[\Xi,Y]} f(Y)\overline{\varphi_X(Y)} dY.
$$

The mapping $\Xi\mapsto e^{-2\pi i [\Xi,\cdot]}$,
from $\mathbb R^{2d}$ to the space of $\phi$ such that 
$\| \phi ^{(\alpha)} (\cdot)(1+|\cdot|)^{-1} \|_{L^{\infty}} < \infty$, for every $\alpha \in \mathbb{N}_0^d$, is well-defined and smooth. Therefore the mapping
$$(X,\Xi)\mapsto e^{-2\pi i[\Xi,\cdot]}\overline{\varphi}_X, 
\quad \text{from} \quad \mathbb R^{2d}\times \mathbb R^{2d}\rightarrow \SSS(\mathbb R^{2d}),$$
is strongly Borel measurable. Consequently, the function
$$(X,\Xi)\mapsto \VV_{\bold{\varphi}}f(X,\Xi) \quad \text{from} \quad
\mathbb R^{2d}\times \mathbb R^{2d}\rightarrow \CC,$$ 
is always Borel measurable, and if $\bold{\varphi}$ is of class $\mathcal{C}^k$, $0\leq k\leq \infty$, then $$\VV_{\bold{\varphi}}f\in\mathcal{C}^k(\mathbb R^{2d}\times \mathbb R^{2d}),  \quad 0\leq k\leq\infty.$$

For the study of $\VV_{\bold{\varphi}}$, we consider the Fr\'echet space
$$
\ds\lim_{\substack{\longleftarrow\\ s\rightarrow \infty}}L^{\infty}_{(1+|\cdot|)^s}(\mathbb R^{2d}\times \mathbb R^{2d}),
$$
and the inductive limit $(LB)$-spaces,
$$
\ds\lim_{\substack{\longrightarrow\\ s\rightarrow \infty}} L^{\infty}_{(1+|\cdot|)^{-s}}(\mathbb R^{2d}\times \mathbb R^{2d}), \quad \text{ and } \quad
\ds\lim_{\substack{\longrightarrow\\ s\rightarrow \infty}} L^1_{(1+|\cdot|)^{-s}}(\mathbb R^{2d}\times \mathbb R^{2d}).
$$

We note that $|\cdot|$ is any norm on $\mathbb R^{2d}\times \mathbb R^{2d}$ and  none of these spaces depends on the particular choice of $|\cdot|$. We have the following embeddings:
\begin{multline}\label{con-inc-den-for-pro-ind-lim-sps}
\SSS(\mathbb R^{2d}\times \mathbb R^{2d})\hookrightarrow \lim_{\substack{\longleftarrow\\ s\rightarrow \infty}} L^{\infty}_{(1+|\cdot|)^s}(\mathbb R^{2d}\times \mathbb R^{2d}) \hookrightarrow\lim_{\substack{\longrightarrow\\ s\rightarrow \infty}} L^{\infty}_{(1+|\cdot|)^{-s}}(\mathbb R^{2d}\times \mathbb R^{2d})\\
\hookrightarrow \lim_{\substack{\longrightarrow\\ s\rightarrow \infty}} L^1_{(1+|\cdot|)^{-s}}(\mathbb R^{2d}\times \mathbb R^{2d})\hookrightarrow \SSS'(\mathbb R^{2d}\times \mathbb R^{2d}).
\end{multline}


The following assertions are proved in  \cite{PP}:
\begin{itemize}
\item[$(i)$] The sesquilinear mapping
\begin{equation}\label{ses-lin-mapp-stft-for-s'}
\SSS'(\mathbb R^{2d})\times \SSS(\mathbb R^{2d})\rightarrow \lim_{\substack{\longrightarrow \\ s\rightarrow \infty}} L^{\infty}_{(1+|\cdot|)^{-s}}(\mathbb R^{2d}\times \mathbb R^{2d}),\quad (f,\bold{\varphi})\mapsto \VV_{\bold{\varphi}}f,
\end{equation}
is well-defined and hypocontinuous. Furthermore, for any bounded subset $B$ of $\SSS'(\mathbb R^{2d})$ there is $s>0$ such that $\VV_{\bold{\varphi}}f\in L^{\infty}_{(1+|\cdot|)^{-s}}(\mathbb R^{2d})$, for all $f\in B$, $\bold{\varphi}\in\SSS(\mathbb R^{2d})$, and the set of linear mappings
\begin{equation*}
\SSS(\mathbb R^d)\rightarrow L^{\infty}_{(1+|\cdot|)^{-s}}(\mathbb R^{2d}\times \mathbb R^{2d}),\quad\bold{\varphi}\mapsto \VV_{\overline{\bold{\varphi}}}f,\qquad f\in B,
\end{equation*}
is equicontinuous subset of $\mathcal{L}(\SSS(\mathbb R^{2d})),L^{\infty}_{(1+|\cdot|)^{-s}}(\mathbb R^{2d}\times \mathbb R^{2d}))$.
\item[$(ii)$] The sesquilinear mapping
\begin{equation}
\SSS(\mathbb R^{2d}\times \mathbb R^{2d})\rightarrow \lim_{\substack{\longleftarrow \\ s\rightarrow \infty}} L^{\infty}_{(1+|\cdot|)^s}(\mathbb R^{2d}\times \mathbb R^{2d}),\quad (\psi,\bold{\varphi})\mapsto \VV_{\bold{\varphi}}\psi,
\end{equation}
is well-defined and continuous.
\item[$(iii)$] Under a suitable (geometric) condition, and if $\bold{\varphi}\in\SSS(\mathbb R^{2d})$, then the conjugate-linear mapping $\SSS(\mathbb R^{2d})\rightarrow \SSS(\mathbb R^{2d}\times \mathbb R^{2d})$, $\psi\mapsto \VV_{\bold{\varphi}}\psi$, is well-defined and continuous.
\end{itemize}

Our main result in \cite{PP} is devoted to the almost diagonalization of $a\in S(M,g)$. We present a result which is a consequence of the main theorem in \cite{PP}.
Recall,  $\mathbb R^{2d}$ is occupied  with the  symplectic structure  $[(x,\xi),(y,\eta)]=\langle \xi,y\rangle-\langle \eta,x\rangle$.

\par

We denote by $\Psi^{g,L}_X$ the topological isomorphism over the Schwartz space ${\mathcal S}(\R^{2d})$,
$$
\Psi^{g}_X:\SSS(\mathbb R^{2d})\rightarrow \SSS(\mathbb R^{2d}),\;\; (\Psi^{g,L}_X\varphi)(Y)=\varphi({\widetilde{Q}_X}^{-1/2}Y),\, Y\in \mathbb R^{2d},
$$
and we extend it, by duality, to the topological isomorphism
\begin{equation}
\Psi^{g,L}_X:\SSS'(\mathbb R^{2d})\rightarrow \SSS'(\mathbb R^{2d}),\;\; \langle\Psi^{g}_Xf,\varphi\rangle=|\det \widetilde{Q}|^{1/2}\langle f,\varphi\circ\widetilde{Q}^{1/2}\rangle.
\end{equation}

In the simplest case  when the metric $g$ is symplectic ($g=g^\sigma$, see below),
 then our main   Theorem in \cite{PP} has the following  form: for each $N\in\NN$,
\begin{equation}\label{1equ-sym-ffr}
\mathbb R^{2d}\times \mathbb R^{2d}\ni(X,\Xi)\mapsto M((X+\Xi)/2)^{-1}(1+g_{\frac{X+\Xi}{2}}(X-\Xi))^N
\end{equation}
$$
\times \left|\left\langle \left(\Psi^{g}_{\frac{X+\Xi}{2}}a\right)^w \pi\left((Q_{\frac{X+\Xi}{2}})^{1/2}X\right)\chi,
\overline{\pi\left((Q_{\frac{X+\Xi}{2}})^{1/2}\Xi\right)\chi}\right\rangle\right| \in L^{\infty}(\mathbb R^{2d}\times \mathbb R^{2d}).
$$

This is a generalization of the result of Gr\"ochenig and Rzeszotnik \cite[Theorem 4.2 $(i)$-$(ii)$]{gro-rz}, since when $g$ is the Euclidean metric on $\RR^{2d}$ and $M(X)=1$ (which corresponds to the H\"ormander class $S^0_{0,0}$),  then
$\Psi^{g,L}_X=\operatorname{Id}$, $\tilde{Q}_X=\operatorname{Id}$, $\forall X\in \RR^{2d}$ and
$$
(X,\Xi)\mapsto \langle X-\Xi\rangle^N|\langle a^w \pi(X)\chi, \overline{\pi(\Xi)\chi}\rangle|\in L^{\infty}(\RR^{4n}).
$$

 The right hand side of \eqref{1equ-sym-ffr} can be explained by the use of simplectic and metaplectic transformations. Denote by $Mp(\mathbb R^{2d})$ and $Sp(\mathbb R^{2d})$ spaces of metaplectic operators and of symplectic transformations over $\mathbb R^{2d}$ (cf. \cite{deGosson}, \cite{lernerB}).  Note that thew interesting approach to the analysis of metaplectic transforms are given in \cite{CGNR} as well as in \cite{deGosson}. The so called lifting theorems using the H\"ormander metric are considered in \cite{ACT} and \cite{GT}.

Let $\Pi$ be the surjective homomorphism $\Pi:\operatorname{Mp}(\mathbb R^{2d})\rightarrow \operatorname{Sp}(\mathbb R^{2d})$. Since ${Q}^{1/2}_X\in\operatorname{Sp}(\mathbb R^{2d})$ for each $X\in \mathbb R^{2d}$, there exists $\Phi^{g}_X\in\operatorname{Mp}(\mathbb R^{2d})$ such that
$$
\Pi(\Phi^{g}_X)={Q}^{-1/2}_X\mbox \quad \text{and}\quad (\Psi^{g}_Xa)^w=(\Phi^{g}_X)^*a^w\Phi^{g}_X,\quad
a\in\SSS'(\mathbb R^{2d}), \;  X\in \mathbb R^{2d}.
$$

Let $\tau_X$, $X\in \mathbb R^{2d}$, be a metaplectic operator 
$$\tau_{(x,\xi)}\kappa=e^{2\pi i\langle y-x/2,\xi\rangle}\chi(\cdot-x), \quad \kappa(X,\Xi)\in \SSS(\mathbb R^{2d}).$$

Clearly, $\tau_{(x,\xi)}=e^{-\pi i\langle x,\xi\rangle}\pi(x,\xi)$. Then, since
$$\Omega\tau_X\Omega^*=\tau_{\Pi(\Omega)X} \mbox{ for all } \Omega\in\operatorname{Mp}(\mathbb R^{2d}), X\in \mathbb R^{2d},$$
(see \cite[Theorem 7.13, p. 205]{deGosson}) we infer that for each $X,Y\in \mathbb R^{2d}$, $\pi({Q}^{1/2}_Y X)$ is equal to $(\Phi^{g}_Y)^*\pi(X)\Phi^{g}_Y$ up to a constant of modulus $1$. Thus,  \eqref{1equ-sym-ffr} is equivalent to the following: for each $N\in \NN$,
\begin{multline*}
(X,\Xi)\mapsto M((X+\Xi)/2)^{-1}(1+g_{\frac{X+\Xi}{2}}(X-\Xi))^N \\
\times \left|\left\langle a^w \pi(X)\Phi^{g}_{\frac{X+\Xi}{2}}\chi, \overline{\pi(\Xi)\Phi^{g}_{\frac{X+\Xi}{2}}\chi}\right\rangle\right| \in L^{\infty}(W\times \mathbb R^{2d}).
\end{multline*}

\section*{Acknowledgements}
S. Pilipovi\'c is supported by the Serbian Academy of Sciences and Arts, project F10.
N. Teofanov and F.Tomi\'c were supported by the Science Fund of the Republic of
Serbia, $\#$GRANT No 2727, \emph{Global and local analysis of operators and
distributions} - GOALS. N. Teofanov gratefully acknowledges the financial support of the Ministry of Science, Technological Development and Innovation of the Republic of Serbia (Grants No. 451-03-66/2024-03/ 200125 $\& $ 451-03-65/2024-03/200125).

\vspace{1cm}

\end{document}